\def\BState{\State\hskip-\ALG@thistlm}
\theoremstyle{plain}
\newtheorem{theorem}{Theorem}[section]
\newtheorem{proposition}[theorem]{Proposition}
\newtheorem{lemma}[theorem]{Lemma}
\newtheorem{corollary}[theorem]{Corollary}
\newtheorem{conjecture}[theorem]{Conjecture}
\theoremstyle{definition}
\newtheorem{definition}[theorem]{Definition}
\newtheorem{example}[theorem]{Example}
\newtheorem{remark}[theorem]{Remark}
\newtheorem{question}[theorem]{Question}
\theoremstyle{observation}
\newtheorem{observation}[theorem]{Observation}
\begin{document}
\title{conditions for matchability in groups and field extensions}

\author[M. Aliabadi, J. Kinseth, C. Kunz, H. Serdarevic, C. Willis]{Mohsen Aliabadi, Jack Kinseth, Christopher Kunz, Haris Serdarevic, Cole Willis}
\address{Mohsen Aliabadi \\ 
Department of Mathematics\\ Iowa State University}%
\email{aliabadi@iastate.edu}
\address{Jack Kinseth \\
Department of Mathematics\\ Iowa State University}
\email{jkinseth@iastate.edu}
\address{Christopher Kunz \\
Department of Mathematics\\ Iowa State University}
\email{ckunz@iastate.edu}
\address{Haris Serdarevic \\
Department of Mathematics\\ Iowa State University}
\email{serharis@iastate.edu}
\address{Cole Willis  \\
Department of Mathematics\\ Iowa State University}
\email{cswillis@iastate.edu}

\thanks{Keywords and phrases. dimension m-intersection property, field extension, matchable subsets, primitive subspace.}
\thanks{2020 Mathematics Subject Classification. Primary: 05D15; Secondary: 11B75, 20D60,
12F10}

\begin{abstract}
 The origins of the notion of matchings in groups spawn from a linear algebra problem proposed by E. K. Wakeford \cite{Wakeford} which was tackled in 1996 \cite{Fan}. In this paper, we first discuss unmatchable subsets in abelian groups. Then we formulate and prove linear analogues of results concerning matchings, along with a conjecture that, if true, would extend the primitive subspace
theorem. We discuss the dimension m-intersection property for vector spaces and its connection to matching subspaces in a field extension, and we prove the linear version of an intersection property result of certain subsets of a given set.
\end{abstract}

\maketitle

\section{Introduction}
Throughout this paper, we may assume that $G$ is an additive abelian group, unless stated otherwise.
Let $B$ be a finite subset of  $G$ which does not contain the neutral element. For any subset $A$ in $G$ with the same cardinality as $B$, a {\it matching} from $A$ to $B$ is defined to be a bijection $f:A\to B$ such that for any $a\in A$, we have $a+f(a)\not\in A$. Evidently, it is necessary for the existence of a matching from $A$ to $B$ that
$\#A=\# B$ and $0\not\in B$. One says that a group $G$
has the {\it matching property} if these necessary conditions are sufficient as well. That is, we say that $G$ has the matching property if for any pairs of finite subsets $A$ and $B$ of it, the conditions $\#A=\#B$ and $0\notin B$ suffice to guarantee the existence of a matching between $A$ and $B$. The notion of matchings in abelian groups was introduced by Fan and Losonczy in \cite{Fan} in order to generalize a geometric property of lattices in Euclidean space related to an old problem of E. K. Wakeford concerning canonical forms for symmetric tensors. In particular, Wakeford in \cite{Wakeford}  considered  the  question of which sets of monomials are removable from a generic  homogeneous polynomial through a linear change in its variables.

The notion of matching has been investigated in literature extensively  in various ways. See \cite{Alon, Eliahou 1, Aliabadi 1} for more results on matchings. A related notion is that of a matching between subspaces of a field extension. In \cite{Eliahou 3}, Eliahou and Lecouvey formulate some linear analogues of matchings in groups and prove similar results in the linear context.  Later, the linear version of a matching is extensively studied by the first author with collaborators in \cite{Aliabadi 1, Aliabadi 2, Aliabadi 3,  Aliabadi 4}. There are still many fascinating open problems in this
area. This paper continues to study some problems motivated in \cite{Aliabadi 1, Aliabadi 2,  Aliabadi 4}. We extend our results on matchings
in groups to the linear setting, which generalizes some results of \cite{Aliabadi 2}. We study primitive subspaces and their applications in partitioning finite fields. Finally, in a related result to matchings, we discuss the dimension $m$-intersection property for vector subspaces. The analogy between matchings in abelian groups and in
field extensions is highlighted throughout the paper, and numerous open
questions are presented for further inquiry. Our tools mix linear algebra and combinatorial number theory.

\subsection{Main results}
We state our main theorems. The needed definitions from matchings in groups and linear matchings appear in Sections \ref{Matching property} and \ref{A Dimension Criterion}. We start with the following theorem in which we present the size of the largest matchable subsets of two given sets.
\begin{theorem}\label{Unmatchable}
Let $G$ be an abelian group and $A$ and $B$ be nonempty finite subsets of $G$ with  $\#A=\#B$ and $A+B\neq A$. Assume that $A$ is not matched  to $B$. Then  $M(A,B)=\#A-D(A,B)$.
\end{theorem}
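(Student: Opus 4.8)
\emph{Proof plan.} This is a defect (K\"onig-type) refinement of a matchability criterion, so I would set it up as a deficiency problem for Hall's theorem. First, encode matchability of subsets as bipartite matchings: for $A'\subseteq A$, let $\Gamma_{A'}$ be the bipartite graph on classes $A'$ and $B$ with $a\sim b$ iff $a+b\notin A'$; then $A'$ is matchable to a subset of $B$ exactly when $\Gamma_{A'}$ has a matching saturating $A'$, and restricting a matching shows that the family of matchable subsets of $A$ is downward closed. Hence $M(A,B)$ is the maximum size of a member of this family, and $D(A,B)$ (from Section \ref{Matching property}) plays the role of the associated Hall deficiency. The theorem then amounts to the two inequalities $M(A,B)\le \#A-D(A,B)$ and $M(A,B)\ge \#A-D(A,B)$.

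The upper bound is the ``blocking'' direction: from a subset $X\subseteq A$ witnessing $D(A,B)$ — roughly, one whose $B$-translates are too concentrated inside $A$ — one shows by a direct count that if a matchable $A'$ discarded fewer than $D(A,B)$ elements of $A$, then Hall's condition for $\Gamma_{A'}$ would already fail on $X\cap A'$. I expect this to be essentially formal once the definition of $D(A,B)$ is unwound.

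The content is the lower bound: one must actually exhibit a matchable subset of size $\#A-D(A,B)$, and this is where the hypotheses are used. Since $A+B\neq A$ there is a pair $a_0\in A$, $b_0\in B$ with $a_0+b_0\notin A$, and since $A$ is not matched to $B$ we have $D(A,B)\ge 1$, so a genuine defect is present. I would induct on $D(A,B)$: for a carefully chosen such pair set $A_1=A\setminus\{a_0\}$ and $B_1=B\setminus\{b_0\}$, show $\#A_1=\#B_1$ and $D(A_1,B_1)=D(A,B)-1$, apply the inductive hypothesis to obtain a matchable $A'\subseteq A_1$ with $\#A'=\#A_1-D(A_1,B_1)=\#A-D(A,B)$, and note that $A'$ is then matchable as a subset of $A$ with respect to $B$ — the defining condition $a+f(a)\notin A'$ never refers to the ambient set. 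The recursion bottoms out when the deficiency hits $0$, at which point Hall's theorem itself delivers a full matching of the reduced pair.

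The main obstacle is the inductive step: choosing $a_0$ (and $b_0$) so that the deficiency drops by \emph{exactly} one and no new obstruction appears in $(A_1,B_1)$. This forces one to control the arithmetic of $G$, and in particular the cosets of finite subgroups, since it is precisely subgroup structure that makes $D(A,B)$ positive in the first place; I anticipate this step resting on the same structural input — coset/Kneser-type analysis, or the matchability criterion established in Section \ref{Matching property} — used there. If one prefers to avoid the induction, the alternative is to name outright a set $S\subseteq A$ with $\#S=D(A,B)$ and verify Hall's condition for $\Gamma_{A\setminus S}$ directly from the deficiency witness; the difficulty is then concentrated in the single (but delicate) choice of $S$.
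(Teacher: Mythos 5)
There is a genuine gap, and it lies in your very first step: the choice of encoding. You attach to each candidate subset $A'\subseteq A$ its own graph $\Gamma_{A'}$ with $a\sim b$ iff $a+b\notin A'$, i.e.\ you let the ambient set shrink along with $A'$. The quantity $M(A,B)$ that actually satisfies the identity $M(A,B)=\#A-D(A,B)$ (and the one the paper works with) is the maximum matching in the \emph{single} bipartite graph $\mathcal{G}_{A,B}$ whose edges are $a+b\notin A$, with $A$ fixed; equivalently, one asks for an injection $f$ of a subset of $A$ into $B$ with $a+f(a)\notin A$. Under your ambient-$A'$ convention the upper bound you call ``essentially formal'' is in fact false, because deleting elements of $A$ creates new edges and can destroy the deficiency witness: take $G=\mathbb{Z}/6\mathbb{Z}$, $A=\{0,2,4\}$, $B=\{1,2,4\}$. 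Every $a\in A$ has $a+b\notin A$ only for $b=1$, so $D(A,B)=3-1=2$ and the theorem asserts $M(A,B)=1$; yet $A_0=\{0,2\}$ is matched to $\{4,2\}$ in your sense, since $0+4=4\notin A_0$ and $2+2=4\notin A_0$. So your claimed count on $X\cap A'$ does not go through ($N_{\Gamma_{A'}}$ can be strictly larger than the neighborhood in $\mathcal{G}_{A,B}$), and no repair is possible without reverting to the fixed ambient set $A$.

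Once the correct graph is fixed, both inequalities are exactly the defect (K\H{o}nig--Ore) form of Hall's theorem for $\mathcal{G}_{A,B}$, which is how the paper argues: the deficiency witness forces at least $D(A,B)$ vertices of $A$ to be unsaturated, and an augmenting-path (or dummy-vertex) argument produces a matching missing only $D(A,B)$ vertices; the hypothesis $A+B\neq A$ merely guarantees the graph has edges so that $M(A,B)$ is defined. Your alternative route for the lower bound -- induction on $D(A,B)$ by deleting a pair $(a_0,b_0)$ with $a_0+b_0\notin A$ so that the deficiency drops by exactly one -- is precisely the step you leave unproved, and it is delicate: removing $b_0$ can only enlarge deficiencies of other sets, so one must pick $a_0$ inside every maximum-deficiency set and control the new tight sets created by deleting $b_0$; no coset or Kneser-type structure theory is needed or helpful here, since the statement is purely graph-theoretic once the encoding is right.
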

The following theorem is concerned with the primitive subspace theorem which is motivated by certain matchable vector subspaces of a simple field extension.
\begin{theorem}\label{general field}
Let $\mathcal{F}$ be a nonempty finite collection of proper subspaces of a $K$-vector space $V$, where $\dim_K(V)=n<\infty$. Let $\#\mathcal{F}\leq \# K$ and $s=\max\{\dim_K(S)\mid S\in\mathcal{F}\}$. Let $T\subseteq V$ be a subspace maximal with the property that $T\cap S=0$ for all $S\in\mathcal{F}$. Then $\dim_K(T)=n-s$.
\end{theorem}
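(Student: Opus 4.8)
The plan is to establish the two inequalities $\dim_K(T)\le n-s$ and $\dim_K(T)\ge n-s$ separately; the first is an immediate dimension count, while the second, where the maximality of $T$ is used, is the heart of the matter. I would first observe that a subspace maximal with respect to the stated intersection property exists at all, since the dimensions of such subspaces are bounded by $n$.

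For the upper bound, fix $S^\ast\in\mathcal{F}$ with $\dim_K(S^\ast)=s$, which exists as $\mathcal{F}$ is finite and nonempty. Since $T\cap S^\ast=0$, the sum $T+S^\ast$ is direct, so $\dim_K(T)+s=\dim_K(T\oplus S^\ast)\le\dim_K(V)=n$, giving $\dim_K(T)\le n-s$. Note this holds for \emph{any} subspace meeting every member of $\mathcal{F}$ trivially, not just a maximal one.

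For the lower bound I would argue by contradiction, assuming $T$ is maximal yet $t:=\dim_K(T)<n-s$, and produce a one-dimensional enlargement of $T$ that still meets every $S\in\mathcal{F}$ trivially. The key lemma, to be proved first, is: for $v\in V\setminus T$ and $S\in\mathcal{F}$, one has $(T+Kv)\cap S\ne 0$ if and only if $v\in S+T$. Indeed, if $0\ne\tau+\lambda v\in S$ with $\tau\in T$ and $\lambda\in K$, then $\lambda\ne 0$ (else $0\ne\tau\in T\cap S$), and rescaling shows $v\in S+T$; conversely, if $v=\sigma+\tau$ with $\sigma\in S$, $\tau\in T$, then $\sigma=v-\tau\in(T+Kv)\cap S$ and $\sigma\ne 0$ because $v\notin T$. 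Hence $T+Kv$ meets every member of $\mathcal{F}$ trivially precisely when $v\notin\bigcup_{S\in\mathcal{F}}(S+T)$, a union that already contains $T$ since $T\subseteq S+T$. Now $\dim_K(S+T)\le\dim_K(S)+\dim_K(T)\le s+t<n$ for each $S$, so each $S+T$ is a \emph{proper} subspace of $V$, and there are at most $\#\mathcal{F}\le\#K$ of them. Invoking the standard fact that a $K$-vector space is never the union of $\#K$ or fewer proper subspaces, I would choose $v$ lying outside all of them; then $T\subsetneq T+Kv$ still intersects every $S\in\mathcal{F}$ trivially, contradicting the maximality of $T$. Combining the two bounds yields $\dim_K(T)=n-s$.

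The main obstacle, and the only nontrivial external ingredient, is the covering fact just quoted: over $K=\mathbb{F}_q$ one needs at least $q+1$ proper subspaces to cover the whole space, and over an infinite $K$ no finite number suffices. This is exactly where the hypothesis $\#\mathcal{F}\le\#K$ enters, and the equality case for finite $K$ shows the hypothesis cannot be weakened. The remaining steps are routine manipulations with dimensions of sums and intersections.
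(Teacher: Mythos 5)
Your proposal is correct and follows essentially the same route as the paper: the upper bound by a direct-sum dimension count, and the lower bound by noting each $S+T$ is proper, invoking the covering result (Lemma \ref{Main Lemma union}, where $\#\mathcal{F}\leq\#K$ is used) to pick $v$ outside $\bigcup_{S\in\mathcal{F}}(S+T)$, and contradicting maximality with $T+Kv$. The only cosmetic difference is that you prove the equivalence $(T+Kv)\cap S\neq 0\iff v\in S+T$ directly, whereas the paper packages the same computation as its Lemma \ref{Sum of subspaces}.
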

In the following theorem, we present the linear analogue of a theorem pertaining to matchings in the group setting. 
\begin{theorem}\label{Linear analog of matchings}
Let $K\subset F$ be a field extension, $A$ and $B$ be two $n$-dimensional $K$-subspaces of $F$, and $n\geq1$. Assume further that for any $b\in  B\setminus\{0\}$, $A$ does not contain any nontrivial linear translate of $K(b)$. Then $A$ is matched to $B$. 
\end{theorem}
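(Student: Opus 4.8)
The plan is to build the matching one dimension at a time. Recall that a matching from $A$ to $B$ is a $K$-linear isomorphism $\phi\colon A\to B$ with $a\phi(a)\notin A$ for every $a\in A\setminus\{0\}$. The first observation is that the hypothesis forces $B\cap K=\{0\}$, the linear counterpart of the condition $0\notin B$: if $0\ne b\in B\cap K$ then $K(b)=K$, and any line $cK\subseteq A$ (one exists since $n\ge1$) is a nontrivial linear translate of $K(b)$, a contradiction. The goal is then to produce flags $0=A_0\subset A_1\subset\cdots\subset A_n=A$ and $0=B_0\subset\cdots\subset B_n=B$ together with $K$-linear isomorphisms $\phi_k\colon A_k\to B_k$ satisfying $\phi_k|_{A_{k-1}}=\phi_{k-1}$ and the stronger partial-matching property $a\phi_k(a)\notin A$ for all $a\in A_k\setminus\{0\}$; then $\phi_n$ is the desired matching.

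For the base case $k=1$ one needs $a_1\in A\setminus\{0\}$ and $b_1\in B\setminus\{0\}$ with $a_1b_1\notin A$. If no such pair existed, then $aB\subseteq A$, hence $aB=A$ by equality of dimensions, for every $a\in A\setminus\{0\}$; fixing $a_0$ and comparing $aB=A=a_0B$ yields $a^{-1}a_0\in\mathrm{Stab}_F(B):=\{c\in F:cB\subseteq B\}$ for all such $a$, whence $A\subseteq a_0\,\mathrm{Stab}_F(B)$ and so $B=a_0^{-1}A\subseteq\mathrm{Stab}_F(B)$. But $\mathrm{Stab}_F(B)$ is a subfield of $F$ and $B$ is a nonzero ideal of it, so $B=\mathrm{Stab}_F(B)\ni1$, contradicting $B\cap K=\{0\}$. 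For the inductive step I would fix $\phi_k$ ($k<n$), pick $a_{k+1}\in A\setminus A_k$, and try to set $\phi_{k+1}(a_{k+1})=b_{k+1}$ for some $b_{k+1}\in B\setminus B_k$. For $a=a'+ca_{k+1}$ with $a'\in A_k$ and $c\in K^\times$ one has $a\phi_{k+1}(a)=a\bigl(\phi_k(a')+cb_{k+1}\bigr)\in A$ if and only if $b_{k+1}\in a^{-1}A-c^{-1}\phi_k(a')$ (using $c^{-1}a^{-1}A=a^{-1}A$), so, the case $a\in A_k$ being automatic from $\phi_k$, asking that $\phi_{k+1}$ be a partial matching amounts to choosing $b_{k+1}$ outside all the sets $a^{-1}A-c^{-1}\phi_k(a')$. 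Each of these meets $B$ in a coset of the subspace $a^{-1}A\cap B$, which is \emph{proper} in $B$ because $aB\not\subseteq A$, once more by $1\notin B$ as in the base case. Hence the obstruction is a union of proper affine subspaces of $B$, indexed by the directions $[a]\in\mathbb{P}(A_{k+1})\setminus\mathbb{P}(A_k)$, together with the hyperplane $B_k$, and the inductive step reduces to the claim that these do not cover $B$.

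The hard part is precisely this non-covering claim, and it is where the full strength of the hypothesis must be used. I would argue by contradiction: if $B_k$ and those forbidden affine subspaces covered $B$, then, by a counting argument when $K$ is finite (using that $\#\bigl(\mathbb{P}(A_{k+1})\setminus\mathbb{P}(A_k)\bigr)$ is controlled) and by a Zariski-density argument when $K$ is infinite, one is driven to a configuration in which, for some nonzero subspace $V\subseteq B$, the subspace $C:=\{x\in A:xV\subseteq A\}$ satisfies $\dim_K C+\dim_K V>n$. Applying the linear analogue of Kneser's addition theorem (and, in the boundary case, of Vosper's theorem) to the product $CV\subseteq A$, the stabilizer $H:=\{c\in F:cCV=CV\}$ is a subfield of $F$ strictly larger than $K$, and $CV$ is an $H$-subspace of $A$, so $A$ contains a translate of $H$; provided $H\supseteq K(b)$ for some $b\in V\setminus\{0\}\subseteq B\setminus\{0\}$, this gives a nontrivial linear translate of $K(b)$ inside $A$, contradicting the hypothesis. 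I expect two points to require care: (i) pinning the obstructing subfield down to an honest element $b\in B$, rather than merely an element of $F$, which ought to mirror how the group-theoretic analogue extracts a coset of $\langle b\rangle$ with $b\in B$ from Kemperman's structure theorem, here exploiting the maximality of $C$; and (ii) the small fields $K=\mathbb{F}_2,\mathbb{F}_3$, where the counting in the non-covering claim is tight, so that one may have to vary the choice of $a_{k+1}$ or to feed the relevant family of subspaces into Theorem~\ref{general field}. One should also check that the linear Kneser/Vosper inputs are valid over an arbitrary base field, if necessary by descending to the stabilizer subfield.
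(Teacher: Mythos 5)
There is a genuine gap, and it sits exactly where you placed it: the ``non-covering claim'' in your inductive step is not an auxiliary verification but the entire content of the theorem, and your proposal does not prove it. You only sketch a strategy (counting for finite $K$, Zariski density for infinite $K$, then a linear Kneser/Vosper argument producing a subfield $H\supsetneq K$ with $H\supseteq K(b)$ for some $b\in B\setminus\{0\}$), and the two points you yourself flag as ``requiring care'' --- forcing the stabilizer to contain $K(b)$ for an honest $b\in B$ rather than a general element of $F$, and the small/finite-field cases --- are precisely the hard part. Note in particular that naive counting cannot close the finite case: at stage $k$ the forbidden affine subspaces are indexed by $\mathbb{P}(A_{k+1})\setminus\mathbb{P}(A_k)$, which has about $(\#K)^{k}$ elements, while by Lemma \ref{Covering} a union of proper subspaces (or cosets) can already cover $B$ once their number exceeds $\#K$; so only structural input of Kneser type can rule out the covering, and that input is exactly what is missing. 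There is also a definitional slip: a matching in this paper (following Eliahou--Lecouvey) is not a linear isomorphism $\phi$ with $a\phi(a)\notin A$ for all nonzero $a$; ``$A$ is matched to $B$'' means every basis $\mathcal{A}$ of $A$ is matched to some basis $\mathcal{B}$ of $B$ in the sense $a_i^{-1}A\cap B\subset\langle b_1,\ldots,\hat b_i,\ldots,b_n\rangle$. Your pointwise notion does imply the paper's one (if such a $\phi$ existed, then for any basis and any $J$ with $U=\bigcap_{i\in J}(a_i^{-1}A\cap B)$ of dimension $>n-\#J$, the subspaces $\phi^{-1}(U)$ and $\langle a_i:i\in J\rangle$ would meet in a nonzero $a$ with $a\phi(a)\in A$, so Proposition \ref{dimension estimate} would apply), but you neither record this implication nor justify that your strictly stronger target --- which sits between ``matched'' and ``strong matching'' --- is actually attainable under the stated hypothesis; nothing in the sketch establishes it, and it may well fail even when the theorem holds.

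By contrast, the paper's proof needs no flag construction and no covering argument: it assumes $A$ is not matched to $B$, invokes the dimension criterion (Proposition \ref{dimension estimate}) to get a basis and a set $J$ with $\dim_K U>n-\#J$ for $U=\bigcap_{i\in J}(a_i^{-1}A\cap B)$, sets $S=\langle a_i:i\in J\rangle$, and applies the linear Kneser theorem (Proposition \ref{subpaces product}) to $\langle U_0S\rangle$ with $U_0=\langle U\cup\{1\}\rangle$, normalizing by the stabilizer $M$ to get $\dim_K\langle U_0S\rangle\geq\dim_K U+\dim_K S-\dim_K(M\cap U)$ while $U_0S\subseteq A$. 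Then $M\cap U=\{0\}$ contradicts $\dim_K A=n$, and $M\cap U\neq\{0\}$ yields $b\in M\cap B\setminus\{0\}$ with $xK(b)\subseteq xM\subseteq\langle U_0S\rangle\subseteq A$, contradicting the hypothesis. If you want to salvage your approach, the honest route is to prove your non-covering claim by exactly this kind of Kneser-plus-stabilizer analysis; at that point the greedy flag scaffolding is doing no work, and you should either discard it or first prove that the pointwise statement you are aiming at really follows from (or is needed for) the hypothesis.
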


\begin{theorem}\label{vector space span}
Let $K\subset F$ be a field extension and $A$ and $B$ be $n$-dimensional $K$-subspaces of $F$ with $n>1$. If $A$ is matched to $B$, then  $\langle AB \rangle\neq A$.
\end{theorem}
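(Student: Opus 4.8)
The plan is to argue by contradiction, and I expect the argument to be short. I would suppose that $A$ is matched to $B$ via a $K$-linear isomorphism $f\colon A\to B$ and that, contrary to the claim, $\langle AB\rangle = A$; here $\langle AB\rangle$ denotes the $K$-subspace spanned by the product set $AB=\{ab : a\in A,\ b\in B\}$. The only thing I need to extract from the equality $\langle AB\rangle = A$ is the inclusion $AB\subseteq\langle AB\rangle = A$: in words, $A$ is closed under multiplication by every element of $B$. (This step would run identically if $\langle AB\rangle$ were read instead as the $K$-subalgebra of $F$ generated by $AB$, since $AB$ always sits inside that subalgebra.)

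Then I would feed the matching $f$ into this. Recall that $f$ being a matching means $af(a)\notin A$ for every nonzero $a\in A$; the value $a=0$ is necessarily excluded, since $f(0)=0$ and $0\cdot f(0)=0\in A$. Because $\dim_K A=n\geq 1$, I may pick a nonzero $a\in A$, and since $f$ is a $K$-linear isomorphism, $f(a)$ is a nonzero element of $B$. Then $af(a)$ is literally a product of an element of $A$ with an element of $B$, so $af(a)\in AB\subseteq A$, contradicting the matching condition. Hence $\langle AB\rangle\neq A$.

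I do not anticipate a genuine obstacle here; the only points requiring care are bookkeeping ones — pinning down the quantifier in the linear matching condition (the requirement is on nonzero $a$ only), and observing that the argument in fact yields the sharper unconditional statement $\langle AB\rangle\not\subseteq A$, since for \emph{any} matching $f$ and any nonzero $a\in A$ the product $af(a)$ is an explicit element of $AB$ lying outside $A$. The argument uses only $A\neq 0$; the restriction $n>1$ in the statement serves mainly to parallel the corresponding result for matchings in groups.
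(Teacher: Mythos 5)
Your argument rests on reading the hypothesis ``$A$ is matched to $B$'' as providing a single $K$-linear isomorphism $f\colon A\to B$ with $af(a)\notin A$ for every nonzero $a\in A$. That is the group-theoretic definition transplanted verbatim, but it is not the definition in force here. In the field-extension setting (Eliahou--Lecouvey, as recalled in Section 3 of the paper), ``$A$ is matched to $B$'' means: every basis $\mathcal{A}=\{a_1,\ldots,a_n\}$ of $A$ can be matched to \emph{some} basis $\mathcal{B}=\{b_1,\ldots,b_n\}$ of $B$, where the matching of bases is the containment $a_i^{-1}A\cap B\subseteq\langle b_1,\ldots,\hat{b}_i,\ldots,b_n\rangle$ for each $i$. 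The hypothesis supplies no linear map $f$ with your pointwise property, and it is not clear one exists, so the key step ``pick nonzero $a$; then $af(a)\in AB\subseteq A$ contradicts the matching condition'' does not engage the actual hypothesis. As written, this is a genuine gap.

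That said, your underlying idea survives with a one-line repair, and the repaired argument is shorter and more self-contained than the paper's. From $\langle AB\rangle=A$ you correctly extract $AB\subseteq A$. Now take any basis $\{a_1,\ldots,a_n\}$ of $A$; by hypothesis it is matched to some basis $\{b_1,\ldots,b_n\}$ of $B$. Since $a_1b_1\in AB\subseteq A$, we get $b_1\in a_1^{-1}A\cap B\subseteq\langle b_2,\ldots,b_n\rangle$, contradicting the linear independence of $\mathcal{B}$. This also delivers your sharper claim $AB\not\subseteq A$ and uses only $n\geq1$. By contrast, the paper's proof goes through Lemma \ref{linear translate}: the equality $\langle AB\rangle=A$ forces $B$ to be a subfield of $F$, hence $1\in B$, and then Lemma 2.3 of \cite{Eliahou 3} (which is where the hypothesis $n>1$ is needed) shows $A$ cannot be matched to $B$. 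Once your proof is restated against the correct definition of linear matching, it avoids both of those ingredients.
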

The following linear algebra result relies on two theorems. The first one is a theorem due to Rado from \cite{Rado} in which the necessary and sufficient condition for existing a free transversal is provided. The second one is an observation concerning a property for vector subspaces called the $m$-intersection property.
\begin{theorem}\label{linear algebra}
Let $W$ be an $n$-dimensional vector space and $ \mathcal{U}=\{U_1,U_2,\ldots,U_t\}$, $t<n$, be a family of subspaces of $W$ each of dimension $n-1$,  and assume that $\mathcal{U}$ satisfies the dimension $n$-intersection property. Then there exist $n-t$ subspaces $U_{t+1},\ldots,U_n$ of $W$ of dimension $n-1$ and a basis $\{x_1,\ldots,x_n\}$ for $W$ such that 
\begin{align*}
 \ker x_i^*+\ker x_j^*=W,
\end{align*}
for any $i$ and $j$ with $U_i\neq U_j$, $1\leq i,j\leq n$. 
\end{theorem}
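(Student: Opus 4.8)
The plan is to dualize and invoke Rado's free-transversal theorem inside $W^{*}$. For each $i \in \{1,\dots,t\}$, choose a nonzero $\phi_i \in W^{*}$ with $U_i = \ker \phi_i$, and set $L_i := U_i^{\circ} = \{\phi \in W^{*} : \phi(U_i) = 0\}$, a one-dimensional subspace spanned by $\phi_i$. The elementary identity $\sum_{i \in J} L_i = \bigl(\bigcap_{i \in J} U_i\bigr)^{\circ}$ yields $\dim \sum_{i \in J} L_i = n - \dim \bigcap_{i \in J} U_i$ for every $J \subseteq \{1,\dots,t\}$. Next I would translate the hypothesis: since each $U_i$ has codimension $1$, subadditivity of codimension already gives $\dim \bigcap_{i \in J} U_i \ge n - |J|$, and the observation recording the meaning of the dimension $n$-intersection property upgrades this to the equality $\dim \bigcap_{i \in J} U_i = n - |J|$ for all $J$. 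Equivalently $\dim \sum_{i \in J} L_i = |J|$ — in particular $\dim \sum_{i \in J} L_i \ge |J|$ — for all $J$, which is precisely the hypothesis of Rado's theorem for the family $(L_1,\dots,L_t)$ of subspaces of the vector space $W^{*}$ regarded as a matroid under linear independence.

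Applying Rado's theorem then produces a free transversal $(\psi_1,\dots,\psi_t)$: pairwise distinct elements $\psi_i \in L_i$ that are linearly independent in $W^{*}$. As $\dim L_i = 1$, each $\psi_i$ is nonzero and spans $L_i$, so $\ker \psi_i = U_i$. Because $t < n = \dim W^{*}$, the set $\{\psi_1,\dots,\psi_t\}$ extends to a basis $\{\psi_1,\dots,\psi_n\}$ of $W^{*}$; I would define $U_i := \ker \psi_i$ for $t < i \le n$, an $(n-1)$-dimensional subspace of $W$. Finally, let $\{x_1,\dots,x_n\}$ be the basis of $W$ dual to $\{\psi_1,\dots,\psi_n\}$, so that the $i$-th coordinate functional $x_i^{*}$ equals $\psi_i$; then $\ker x_i^{*} = \ker \psi_i = U_i$ for every $1 \le i \le n$. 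For $i \ne j$ the functionals $\psi_i$ and $\psi_j$ are independent, hence $\ker x_i^{*}$ and $\ker x_j^{*}$ are distinct hyperplanes of $W$ whose sum has dimension $n$; in particular $\ker x_i^{*} + \ker x_j^{*} = W$ whenever $U_i \ne U_j$, which is the assertion.

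The only step here that is not pure bookkeeping is recognizing that the dimension $n$-intersection property becomes, after passing to annihilators, exactly Rado's transversal condition in $W^{*}$; this is where the observation cited just before the statement does its work, and once the identity $\dim \sum_{i \in J} U_i^{\circ} = n - \dim \bigcap_{i \in J} U_i$ is in place it is immediate. One small detail to keep in mind is that a transversal in Rado's theorem must consist of distinct elements; this costs nothing here, since linear independence of the $\psi_i$ already forces them to be pairwise distinct, and the intersection property anyway rules out coincidences $U_i = U_j$ among the original hyperplanes.
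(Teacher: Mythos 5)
Your argument is correct, and its core (dualize, invoke Rado's free-transversal theorem, pass to the dual basis, and use that two distinct hyperplanes sum to $W$) is the same engine the paper uses. The structural difference is in how the family is completed to $n$ hyperplanes. The paper first applies Observation 5.4 (the linear analogue of Brualdi--Friedland--Pothen's extension lemma) to enlarge $\{U_1,\ldots,U_t\}$ to $n$ hyperplanes still satisfying the dimension $n$-intersection property, and only then applies Rado to all $n$ annihilators $U_i^\perp$ to get a basis $\{\psi_1,\ldots,\psi_n\}$ of $W^*$. You instead apply Rado only to the $t$ one-dimensional annihilators $U_i^{\circ}$ (where, as you essentially note, it is overkill: $\dim\sum_{i\le t}U_i^{\circ}=t$ already forces the spanning functionals to be independent), then extend the resulting independent functionals to a basis of $W^*$ by ordinary linear algebra and \emph{define} $U_{t+1},\ldots,U_n$ as the kernels of the added functionals. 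This buys you a self-contained proof that does not rely on Observation 5.4, whose proof the paper only sketches by reference to \cite{Brualdi}; what the paper's route buys in exchange is that its extended family $\{U_1,\ldots,U_n\}$ itself satisfies the dimension $n$-intersection property, a slightly stronger output than the theorem's stated conclusion (which your construction meets in full, since your $\psi_i$ are independent and hence $\ker x_i^*+\ker x_j^*=W$ for \emph{all} $i\neq j$, in particular whenever $U_i\neq U_j$). Your side remarks are also accurate: the identity $\sum_{i\in J}U_i^{\circ}=\bigl(\bigcap_{i\in J}U_i\bigr)^{\circ}$ gives exactly Rado's hypothesis, and the $2$-element case of the intersection property already rules out repetitions among the original hyperplanes.
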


We now present an outline of the paper. In Section \ref{Matching property}, we discuss matchings in the context of abelian groups and connect this notion to matchings in bipartite graphs. With this, along with a result on maximum matchings in bipartite graphs, we elucidate the algebraic structure of unmatchable subsets. In Section \ref{A Dimension Criterion}, we present a generalization of a linear algebra result on primitive subspaces of field extensions which arose from matching subspaces in simple field extensions. In Section \ref{The Linear Matching Property}, we formulate and prove linear analogues of results concerning matchings in groups. Section \ref{intersection property} establishes a link between matchable subspaces and a certain property of finite families of vector subspaces called the dimension $m$-intersection property. Finally, in Section \ref{Future}, we present a possible direction for future work in this line of research.

\section{Matching Property in Abelian Groups}\label{Matching property}
To begin our investigation, we note that many results on the problem of
classifying matchable subsets in groups 
are known. One of the earliest results in this direction appears in \cite{Losonczy}, where it is shown that an abelian group satisfies the
matching property if and only if it is either torsion-free or of prime order. Later, this result is generalized for arbitrary groups \cite{Eliahou 1}.  This classification was established using methods pertaining
to additive number theory and combinatorics. Specifically, the additive tools
used are lower bounds on the size of the sumset
\[
A+B=\{a+b:\ a\in A\,    \text{and}\   b\in B\},
\]
in $G$, and the main combinatorial tool is a result due to Philip Hall \cite{Hall} which states as follows:
\begin{theorem}[Hall's marriage theorem]
Let $\mathcal {G}_{A,B}=(V(\mathcal {G}_{A,B}),E(\mathcal {G}_{A,B}))$ be a bipartite graph with bipartitions $A$ and $B$ so that $\#A=\#B$. Then $\mathcal {G}$ has a perfect matching if and only if for each subset $S$ of $A$, $\#S \leq \#N(S)$, where $N(S)$ denotes the set of
vertices which are adjacent to at least one vertex in $S$

\end{theorem}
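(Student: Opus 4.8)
The plan is to prove both implications, the forward (necessity) direction being immediate and the reverse (sufficiency) direction carrying the real content. For necessity, I would note that if $\mathcal{G}_{A,B}$ admits a perfect matching $f$, then $f$ restricts to an injection of any $S\subseteq A$ into $N(S)$, so $\#S=\#f(S)\leq \#N(S)$; thus Hall's condition is forced. For the converse it is cleanest to prove the slightly more general statement that Hall's condition guarantees a matching saturating $A$ (covering every vertex of $A$); since we assume $\#A=\#B$, such a matching is automatically perfect.

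I would establish the $A$-saturating version by induction on $n=\#A$, assuming Hall's condition $\#S\leq\#N(S)$ for every $S\subseteq A$. The base case $n=1$ is trivial, as the unique vertex of $A$ has a neighbor and can be matched. For the inductive step I would split according to whether the surplus in Hall's condition is ever exhausted by a proper subset. In the first case, suppose every nonempty proper $S\subsetneq A$ satisfies the strict inequality $\#N(S)\geq\#S+1$. I would pick any $a\in A$, choose a neighbor $b\in N(\{a\})$, match $a$ to $b$, and delete both vertices. In the remaining graph on $A\setminus\{a\}$ and $B\setminus\{b\}$, removing $b$ lowers each neighborhood by at most one, so every $S\subseteq A\setminus\{a\}$ still satisfies $\#N'(S)\geq\#N(S)-1\geq\#S$. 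Hall's condition persists, the induction hypothesis produces a matching saturating $A\setminus\{a\}$, and adjoining the edge $ab$ saturates $A$.

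The substantive case is the second one, where some nonempty proper $S_0\subsetneq A$ is tight, meaning $\#N(S_0)=\#S_0$. Restricting to the subgraph on $S_0$ and $N(S_0)$, Hall's condition still holds because $N(S)\subseteq N(S_0)$ for every $S\subseteq S_0$; since $\#S_0<n$, the induction hypothesis yields a matching saturating $S_0$ onto $N(S_0)$. It then remains to saturate $A\setminus S_0$ using only $B\setminus N(S_0)$, and here I would verify Hall's condition for this complementary problem. For $S\subseteq A\setminus S_0$ the neighborhood in the complementary graph is exactly $N(S)\setminus N(S_0)$, and using $N(S\cup S_0)=N(S)\cup N(S_0)$ together with the tightness of $S_0$ gives $\#\bigl(N(S)\setminus N(S_0)\bigr)=\#N(S\cup S_0)-\#S_0\geq\#(S\cup S_0)-\#S_0=\#S$, where the inequality is Hall's condition applied to the set $S\cup S_0$ (disjoint from $S_0$, hence of size $\#S+\#S_0$). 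Thus Hall's condition holds for the complementary graph on $A\setminus S_0$, whose $A$-side has size $n-\#S_0<n$; induction supplies a matching saturating it, and splicing the two matchings saturates all of $A$.

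The main obstacle is precisely the bookkeeping in this tight-set case: one must confirm that both the restricted problem on $(S_0,N(S_0))$ and the complementary problem on $(A\setminus S_0,B\setminus N(S_0))$ inherit Hall's condition. The decisive point is the counting identity $\#\bigl(N(S)\setminus N(S_0)\bigr)=\#N(S\cup S_0)-\#N(S_0)$, which, under tightness, transfers Hall's inequality for $S\cup S_0$ in the original graph into Hall's inequality for $S$ in the smaller one. Once this is in place, the two inductive applications combine without further difficulty, and equality $\#A=\#B$ at the top level upgrades the $A$-saturating matching to a perfect matching.
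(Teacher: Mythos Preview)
Your argument is correct and is the classical inductive proof of Hall's theorem, with the standard case split on whether some nonempty proper subset is tight; the bookkeeping you carry out for the complementary graph is exactly what is needed. Note, however, that the paper does not give its own proof of this statement: it is simply quoted as Hall's result with a citation to \cite{Hall}, so there is no in-paper proof to compare your approach against.
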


Having the classification of groups in terms of the matching property in place, a natural question one might raise is: given an arbitrary group $G$, is there any criterion to characterize matchable subsets in a more general way? This problem is studied in \cite{Aliabadi 2, Aliabadi 3} which highlights a close relation between matchable subsets and certain cosets of $G$. In particular, it is observed in \cite{Eliahou 1} that the 
existence of nontrivial proper finite subgroups is an obstruction for the matching property. Inspired by this observation, the following is proved in \cite {Aliabadi 2}.

\begin{proposition}\label{Matching generalziation}
Let $G$ be an abelian group and let $A$ and $B$ be finite subsets of G with the
same cardinality. Assume further that for any element $b \in B$, A does not contain any coset
of the subgroup generated by $b$. Then there is a matching from $A$ to $B$.
\end{proposition}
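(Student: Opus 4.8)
The plan is to prove the contrapositive using Hall's marriage theorem. First, the hypotheses force $0\notin B$: otherwise, applying the assumption with $b=0$ (so $\langle b\rangle=\{0\}$, whose cosets are the singletons) would make $A$ contain no singleton, hence $A=\emptyset$, hence $B=\emptyset$, contradicting $0\in B$. Attach to $A$ and $B$ the bipartite graph on parts $A,B$ in which $a$ and $b$ are adjacent exactly when $a+b\notin A$; a perfect matching in this graph is literally a matching from $A$ to $B$, so if none exists then Hall's theorem supplies a set $S\subseteq A$ with $\#N(S)<\#S$, whence $S\neq\emptyset$. Since $b\notin N(S)$ means precisely $S+b\subseteq A$, the set $D:=B\setminus N(S)$ satisfies $S+D\subseteq A$; and because $\#N(S)\le\#S-1$ we get $\#D=\#B-\#N(S)\ge\#A-\#S+1$ (using $\#A=\#B$ and $S\subseteq A$), while $0\notin D$ as $0\notin B$. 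If $S=A$, then $A+d=A$ for each $d\in D$, so $A\supseteq a+\langle d\rangle$ for every $a\in A$; this is impossible when $d$ has infinite order ($A$ is finite) and violates the hypothesis otherwise ($d\in B$). Hence $\emptyset\neq S\subsetneq A$.

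The main step is to feed $S+D\subseteq A$ into a sumset lower bound. Let $H$ be the stabilizer of $S+D$; by Kneser's theorem $S+D$ is a union of $H$-cosets and $\#(S+D)\ge\#(S+H)+\#(D+H)-\#H$. If $H\neq 0$, then $A$ already contains a coset of $H$: should $b\in B\cap H$ for some $b$, that coset is a union of cosets of $\langle b\rangle$, contradicting the hypothesis; otherwise $B\cap H=\emptyset$, so $D\cap H=\emptyset$, and one descends to $G/H$ — where $S+D$ acquires trivial stabilizer, the images of the elements of $D$ avoid $0$, and the assumption survives after replacing $A$ by the union of the $H$-cosets contained in it — iterating until the stabilizer is trivial. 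If $H=0$, then $\#(S+D)\le\#A$ and $\#D\ge\#A-\#S+1$ force $\#(S+D)=\#A$ and $\#(S+D)=\#S+\#D-1$, so $S+D=A$ and $(S,D)$ is a critical pair for the inequality $\#(X+Y)\ge\#X+\#Y-1$. By the structure theorem for critical pairs (Kemperman, with Vosper's theorem the case $G=\mathbb{Z}/p\mathbb{Z}$), $S$ and $D$ are arithmetic progressions with a common difference $e$ — the singleton cases and the remaining degenerate configurations being handled directly from $S\subseteq A$ and $0\notin B$. Then $A=S+D$ is an arithmetic progression with difference $e$ as well; writing $A$, $S$, $D$ as blocks of consecutive multiples of $e$ and combining $S\subseteq A$ with $\#S+\#D-1=\#A$ forces the block $D$ to contain $0$, so $0\in D\subseteq B$, contradicting $0\notin B$. (When $e$ has finite order one first rules out $A$ being a full coset of $\langle e\rangle$: there $D\subseteq A-s_0=\langle e\rangle$ for $s_0\in S$, whereas the hypothesis gives $B\cap\langle e\rangle=\emptyset$, so $D=\emptyset$, contradicting $\#D\ge 1$.) Every branch ends in a contradiction, so a matching from $A$ to $B$ exists.

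I expect the case $H=0$ to be the main obstacle: converting the tightness of Kneser's bound into a genuine contradiction requires the classification of critical pairs, and it is precisely there that the hypothesis $0\notin B$ is consumed. This cannot really be avoided, since when the coset hypothesis is vacuous — e.g.\ for torsion-free $G$ — Proposition~\ref{Matching generalziation} specializes to the classical fact that torsion-free abelian groups enjoy the matching property, whose proof already rests on this sumset analysis. A secondary difficulty is the bookkeeping when $H\neq 0$: one must ensure that the coset eventually exhibited inside $A$ is a coset of $\langle b\rangle$ for an actual $b\in B$, not merely of the auxiliary subgroup $H$, and that the passage to $G/H$ retains the hypotheses in a form strong enough to rerun the argument.
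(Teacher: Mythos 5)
Your frame---deriving $0\notin B$ from the coset hypothesis, passing to the bipartite graph, extracting from the failure of Hall's condition a nonempty $S\subseteq A$ and $D=B\setminus N(S)$ with $S+D\subseteq A$ and $\#D\geq \#A-\#S+1$, and then applying a Kneser-type sumset bound---is exactly the standard route; note the paper itself does not prove Proposition~\ref{Matching generalziation} but quotes it from \cite{Aliabadi 2}, where the argument is of this Hall-plus-Kneser type. However, as written your proof has two genuine gaps, precisely in the two branches you flag as difficult. In the branch $H\neq 0$, $B\cap H=\emptyset$, the descent to $G/H$ is not yet an argument: after replacing $A$ by the union of the $H$-cosets it contains, the quantities $\#A$, $\#S$, $\#D$ all change, you never re-establish the inequality $\#D\geq\#A-\#S+1$ (or any Hall-type failure) for the new triple, and no induction parameter is given that makes the iteration terminate for an arbitrary abelian group. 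In the branch $H=0$, you invoke ``the structure theorem for critical pairs'' as if it said that $S$ and $D$ are arithmetic progressions up to singletons and directly dismissible degenerate cases; Kemperman's theorem for general abelian groups is a recursive statement involving quasi-periodic decompositions and several elementary types beyond progressions (already in $\mathbb{Z}/p\mathbb{Z}$, Vosper's theorem has the exceptional case $\#(S+D)=p-1$), so this step appeals to a case analysis you have not carried out.

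Both gaps are avoidable by one small change, which is the actual key idea: adjoin $0$ to $D$. Since $0\notin B$, the set $D_0=D\cup\{0\}$ has $\#D_0\geq\#A-\#S+2$, and $S+D_0=S\cup(S+D)\subseteq A$ because $S\subseteq A$. Let $H$ be the stabilizer of $S+D_0$ and apply Kneser to $S+D_0$. If $D\cap H=\emptyset$ (which subsumes $H=0$), then the coset $H$ is disjoint from $D+H$, so $\#(D_0+H)\geq\#D+\#H$, and Kneser gives
\[
\#A\ \geq\ \#(S+D_0)\ \geq\ \#(S+H)+\#(D_0+H)-\#H\ \geq\ \#S+\#D\ \geq\ \#A+1,
\]
a contradiction by pure counting, with no critical-pair analysis. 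If $D\cap H\neq\emptyset$, pick $b\in D\cap H\subseteq B$ (so $b\neq 0$ and $\langle b\rangle\subseteq H$); since $S+D_0$ is a union of $H$-cosets and contains $S$, any $s\in S$ gives $s+\langle b\rangle\subseteq s+H\subseteq S+D_0\subseteq A$, contradicting the hypothesis, with no passage to $G/H$. With this modification your outline closes; without it, the two steps named above are exactly where the proof is still open.
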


Motivated by Proposition \ref{Matching generalziation}, one may ask that if $A$ is matched to $B$, can we conclude that for any element $b\in B$, $A$ does not contain any subgroup generated by $b$? The answer is negative. For example, consider $G=\mathbb{Z}/6\mathbb{Z}$ and $A=B=G\setminus\{\bar{0}\}$. Then  $A$ is matched to $B$ via the map $\bar{a}\mapsto -\bar{a}$, but $A$ contains $\{\bar{1},\bar{4}\}=\bar{1}+\langle\bar{3}\rangle$, a coset of the subgroup generated by $\bar{3}\in B$.

 In Section \ref{The Linear Matching Property}, we will formulate and prove a linear analogue of Proposition \ref{Matching generalziation}.

\subsection{Unmatchable subsets}
 All preceding results in the literature on matchings in the group setting address the conditions and cases in which certain subsets are matchable. In this subsection, we briefly investigate unmatchable subsets. Let $A$ and $B$ be two finite nonempty subsets of an abelian group $G$ with the same cardinality $n$ and $0\not\in\ B$. Assume further that $A$ is not matched to $B$. We are interested in determining the size of the largest possible subset $A_0$ of $A$ for which $A_0$ can be matched to a subset of $B$ in the usual sense. Denote the size of such a maximum subset by $M(A,B)$ provided that $A+B\neq A$, and $M(A,B)=0$ if and only if $A+B=A$. Motivated by this definition, we investigate the structure of subsets with $A+B=A$.
 
\begin{lemma}\label{Coset}
Let $A$ and $B$ be nonempty finite subsets of an arbitrary group $G$. Assume that $\# A\leq\# B$ and that $A+B=A$. Then $B$ is a subgroup of $G$, and $A$ is a coset of $B$.
\end{lemma}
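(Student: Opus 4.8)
The plan is to exploit the hypotheses $\#A \le \#B$ and $A + B = A$ together in a counting argument, using the finiteness of $A$ crucially. First I would fix an arbitrary element $b \in B$ and consider the translation map $\tau_b : A \to G$ given by $a \mapsto a + b$. The hypothesis $A + B = A$ tells us that $\tau_b(A) \subseteq A + B = A$ (since $b \in B$), so $\tau_b$ restricts to a map $A \to A$; because $\tau_b$ is injective (left translation in a group is injective) and $A$ is finite, $\tau_b$ is in fact a bijection of $A$ onto itself. Thus $A + b = A$ for every $b \in B$, which is the key structural fact.

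Next I would bootstrap from $A + b = A$ for all $b \in B$ to conclude that $B$ is closed under the relevant operations. Fix any $b_0 \in B$ and set $H = -b_0 + B = \{-b_0 + b : b \in B\}$; I aim to show $H$ is a subgroup and $B = b_0 + H$, from which $B = b_0 + H$ is a coset of the subgroup $H$ — but we want $B$ itself to be a subgroup, so I would instead argue more directly. From $A + b = A$ and $A + b' = A$ we get $A + (b - b') = A$ for all $b, b' \in B$ (apply the first, then translate by $-b'$ using that $A + b' = A$ implies $A = A - b'$). Now pick a fixed $a_0 \in A$. For any $b, b' \in B$ we have $a_0 + (b - b') \in A + (b-b') = A$, so the set $B - B$ lands inside $-a_0 + A$; more importantly, I would show $0 \in B$: since $A + b = A$ for $b \in B$, iterating gives $A + nb = A$ for all $n \ge 1$, and in an arbitrary group one must be careful here — $b$ need not have finite order. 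The cleaner route is: define $H = \{g \in G : A + g = A\}$, the stabilizer of $A$ under translation; this is visibly a subgroup of $G$ (closed under the group operation and inverses, contains $0$), and we have just shown $B \subseteq H$. Then $A$ is a union of cosets of $H$, but $\#A \le \#B \le \#H$ forces $A$ to be contained in a single coset of $H$, hence $\#A = \#H$ and $A$ is exactly one coset $a_0 + H$; combined with $\#B \le \#H = \#A \le \#B$ we get $\#B = \#H$, and since $B \subseteq H$ this gives $B = H$, a subgroup.

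The main obstacle I anticipate is the non-commutativity: the lemma is stated for an arbitrary group, so I must be careful whether "$A + B = A$" means $\{a + b\}$ with left or right placement, and correspondingly whether the stabilizer $\{g : A + g = A\}$ (right stabilizer) or $\{g : g + A = A\}$ (left stabilizer) is the relevant object. Using additive notation in a general group is itself a mild abuse; I would read $A + B$ as $\{a + b : a \in A, b \in B\}$ and work with the right stabilizer $\mathrm{Stab}_r(A) = \{g \in G : A + g = A\}$, checking that it is a subgroup (for $g, h$ in it, $A + (g+h) = (A+g) + h = A + h = A$, and $A + g = A$ implies $A = A + (-g)$). Everything else is a finiteness-forces-equality argument and should go through verbatim in the non-abelian setting. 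I would close by noting that the conclusion "$A$ is a coset of $B$" means a left coset $a_0 + B$, which is what the stabilizer description delivers.
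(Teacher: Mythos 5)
Your proposal is correct and follows essentially the same route as the paper: establish $A+b=A$ for each $b\in B$ via injectivity of translation plus finiteness of $A$, introduce the stabilizer $\{g\in G: A+g=A\}$ (the paper's set $X$), observe it is a subgroup containing $B$, and then use the counting chain $\#H\leq\#A\leq\#B\leq\#H$ to force $B=H$ and $A=a_0+B$. The side remarks about iterating $A+nb=A$ are unnecessary detours, but the final stabilizer argument is exactly the paper's proof.
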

\begin{proof}
If $b$ is in $B$, then the mapping $\underset{a \longmapsto a+b}{\varphi:   A \rightarrow A+b}$ is injective, and thus $\#(A+b)=\# A$. Since $A+b\subset  A+B=A$ and $A$ is finite, it follows that $A+b=A$. Now let  $X=\{x\in G: A+x=A\}$. Then $B\subset  X$ and $X$ is a subgroup of $G$. Also $A+X=A$. If $a\in A$, then $a+X$ is a left coset of the subgroup $X$, and thus $\#(a+X)=\# X$, and we have $a+X\subset  A+X=A$. Then $\# B\geq \# A\geq \#(a+X)=\# X\geq \# B$, so $\# X=\# B$, and $\# A=\#(a+X)$. Since $B$ is finite and contained in $X$, and $\#B=\# X$, it follows that $B=X$, so $B$ is a subgroup as desired. 

Since $A$ is finite and contains $a+X$, and $\# A=\# (a+X)$, it follows that $A=a+X$. We know that $X=B$, so $A=a+X=a+B$, and thus $A$ is the left coset $a+B$ of $B$. The proof is complete.
\end{proof}

\begin{corollary}\label{Lemma consequence}
Let $A$, $B$ and $G$ be as in Lemma \ref{Coset}. Then $0\in B$.
\end{corollary}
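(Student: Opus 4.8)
The plan is to invoke Lemma \ref{Coset} directly. Under the stated hypotheses ($\#A\leq\#B$ and $A+B=A$), that lemma asserts that $B$ is a subgroup of $G$. Since every subgroup of an additive group contains the neutral element, we conclude $0\in B$. There is essentially no obstacle here: the corollary is an immediate consequence of the structural conclusion of Lemma \ref{Coset}, and the only thing to note is that the lemma's proof already identifies $B$ with the stabilizer $X=\{x\in G:\ A+x=A\}$, which is a subgroup and hence contains $0$.
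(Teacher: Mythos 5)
Your proposal is correct and matches the paper's own argument exactly: both deduce $0\in B$ immediately from the conclusion of Lemma \ref{Coset} that $B$ is a subgroup of $G$.
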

\begin{proof}
It is immediate from $B$ being  a subgroup of $G$. 
\end{proof}

\begin{corollary} 
Let $A$, $B$ be nonempty finite subsets of an arbitrary group $G$ of the same cardinality. Then $M(A,B)$=0 if and only if $B$ is a subgroup of $G$ and $A$ is a left coset of $B$.
\end{corollary}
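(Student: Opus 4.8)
The plan is to obtain this corollary as a direct consequence of Lemma \ref{Coset}, combined with the defining convention for $M(A,B)$ recorded just above it, namely that $M(A,B)=0$ precisely when $A+B=A$. Since here $A$ and $B$ have equal cardinality, the hypothesis $\#A\leq\#B$ needed to invoke Lemma \ref{Coset} is automatically satisfied, so the lemma applies without modification. Thus both implications reduce to manipulating the single equation $A+B=A$.

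For the forward direction, I would assume $M(A,B)=0$. By definition this means $A+B=A$, and then Lemma \ref{Coset}, applied with $\#A=\#B$, yields immediately that $B$ is a subgroup of $G$ and that $A$ is a left coset of $B$. For the converse, I would assume $B$ is a subgroup of $G$ and $A=g+B$ for some $g\in G$; then $A+B=(g+B)+B=g+(B+B)=g+B=A$, using closure of the subgroup $B$ to get $B+B=B$. Hence $A+B=A$, which by the convention for $M$ forces $M(A,B)=0$.

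I do not expect any genuine mathematical obstacle here: the only care required is bookkeeping, namely keeping the term \emph{left coset} consistent with the additive notation carried over from the abelian setting (so that $A+b$ denotes the right translate of $A$ by $b$, while $g+B$ denotes a left coset of the subgroup $B$), and making sure that the convention $M(A,B)=0\iff A+B=A$ from the preceding subsection is the one being used. No combinatorial or additive-number-theoretic input beyond Lemma \ref{Coset} enters the argument.
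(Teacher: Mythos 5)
Your proposal is correct and follows exactly the route the paper intends: the paper's proof is simply ``It is immediate,'' meaning the combination of the convention $M(A,B)=0\iff A+B=A$ with Lemma \ref{Coset} (applicable since $\#A=\#B$ gives $\#A\leq\#B$) for the forward direction, and the trivial computation $(a+B)+B=a+B$ for the converse. Your write-up just makes these immediate steps explicit, so there is nothing to correct.
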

\begin{proof}
It is immediate.
\end{proof}
We associate a bipartite graph $\mathcal {G}_{A,B}=(V(\mathcal {G}_{A,B}),E(\mathcal {G}_{A,B}))$ to the pair of sets $A$ and $B$ as follows. The nodes of $\mathcal {G}_{A,B}$ are given by the bipartition $V(\mathcal {G}_{A,B})=A\cup B$, and there is an edge $e(a,b)\in E(\mathcal {G}_{A,B})$ joining $a\in A$ to $b\in B$ if and only if $a+b\notin A$.

Assuming $A$ is not matched to $B$, Hall's condition fails for some $S\subset\ A$, i.e. $\#S>\#N(S)$, where $N(S)$ stands for the set of vertices which are adjacent to at least one vertex in $S$. Define $D(A,B)= \text{max}\{\#S-\# \cup_{a\in S}\ B_a: S\subset A\}$. Since $A$ is not matched to $B$, then $D(A,B)> 0$. In what follows, we prove Theorem \ref{Unmatchable} in which the size of the largest matchable subsets of $A$ and $B$ is provided.
Our approach to proving the statement requires that we adapt the existing proofs for Hall's marriage theorem. In other words, we shall employ an argument similar to that of the existence of perfect matchings in balanced bipartite graphs.

\begin{proof}[Proof of Theorem \ref{Unmatchable}.]
Let $D(A,B)=d$ and $\# A=n$. It suffices to show that the bipartite graph $\mathcal {G}$ associated to $(A,B)$ has a matching of size $\#A-d$ and that the size of every matching in $\mathcal {G}$ is less than or equal to $\# A-d$. We break the proof down into two steps:

\noindent
{\textbf{Step 1:}} $M(A,B)\leq n-d$:  According to the definition of $D(A,B)$, at least $d$ vertices of $A$ will remain unmatched in any matching in $\mathcal {G}$ in the graph setting. This implies $M(A,B)\leq d-n$.

\noindent
{\textbf{Step 2:}} $M(A,B)\geq n-d$: Suppose $M(A,B)=n-k$, for some $0\leq k \leq {n-1}$. Then in our matching, there must be $k$ unmatched vertices in $A$ for which the
alternating tree rooted at these vertices does not contain an augmenting path. We can construct a set $S$ with $\#S-\#\cup_{a\in S}\ B_a=k$, where $B_a$ is defined as $B_a=\{y\in B: a+y\notin A\}$. Since $D(A,B)$ is the maximum of such differences, it follows that $\# D(A,B)\geq k$. This implies that  $n-\# D(A,B)\leq n-k$. So $M(A,B)\geq n-d$.\

By step 1 along with step 2 we totally arrive at the desired result.
\end{proof}
\begin{remark}
 Note that the method of associating a bipartite graph to our subsets in Theorem \ref{Unmatchable} first was used in \cite{Aliabadi 1} as a tool to count the number of matchings of matchable subsets of a given abelian group. See also \cite{Hamidoune} for more details about the counting aspect of matchable pairs of sets.
\end{remark}
\section{A Dimension Criterion for Primitive Matchable Subspaces}\label{A Dimension Criterion}
In this section, we shall assume that $K\subset F$ is a field extension, $A,B\subset F$ are two $n$-dimensional $K$-subspaces of $F$, and $\mathcal{A}=\{a_1,\ldots,a_n\}$, $\mathcal{B}=\{b_1,\ldots,b_n\}$ are ordered bases of $A,B$, respectively. The Minkowski product $AB$ of $A$ and $B$ is defined as $AB:=\{ab:\, a\in A, b\in B\}$. Note that Eliahou and Lecouvey have introduced the following notions for matchable bases of subspaces in a field extension \cite{Eliahou 3}. The ordered basis $\mathcal{A}$ is said to be {\it matched} to an ordered basis $\mathcal{B}$ of $B$ if 
\begin{align*}
 a^{-1}_iA\cap B\subset \langle b_1,\ldots,\hat{b}_i,\ldots,b_n\rangle,
\end{align*}
for each $1\leq i\leq n$, where $\langle b_1,\ldots,\hat{b}_i,\ldots,b_n\rangle$ is the vector space spanned  by  $\mathcal{B}\setminus\{b_i\}$. The subspace $A$ is {\it matched}  to the subspace $B$ if every basis of $A$ can be matched to a basis of $B$. A {\it strong matching} from $A$ to $B$ is a linear transformation $T:A\to B$ such that every basis $\mathcal{A}$ of $A$ is matched to the basis $T(\mathcal{A})$ of $B$. Finally, the extension $F$ of $K$ has the {\it linear matching property} if for every pair $A$ and $B$
of $n$-dimensional $K$-subspaces of $F$ with $n>1$ and $1\not\in B$, $A$ is matched to $B$.

It is shown in \cite{Losonczy} that for a nontrivial finite cyclic group $G$ and finite nonempty subsets $A$, $B$ of $G$  with $\# A=\# B$, there exists a matching from $A$ to $B$ if every element of $B$ is a generator of $G$. The linear analogue of this result is given in \cite{Aliabadi 3} as the following theorem.

\begin{theorem}\label{primitive matchings}
Let $K\subset F$ be a separable field extension and $A$ and $B$ be two $n$-dimensional $K$-subspaces of $F$ with $n>1$. Then $A$ is matched to $B$ provided that $B$ is a primitive $K$-subspace of $F$. 
\end{theorem}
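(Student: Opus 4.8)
The plan is to deduce Theorem~\ref{primitive matchings} from Theorem~\ref{Linear analog of matchings}: the hypothesis that $B$ is primitive is exactly what makes the hypothesis of that theorem hold trivially for the pair $(A,B)$.

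First I would record the elementary consequences of $B$ being a primitive $K$-subspace with $n=\dim_K B>1$. If a nonzero $c\in K$ lay in $B$, then $c$ would be a primitive element of $F/K$, so $F=K(c)=K$ and hence $\dim_K B\le 1$, a contradiction; thus $K\cap B=\{0\}$, and in particular $1\notin B$ and $B\subsetneq F$. It follows that $\dim_K A=\dim_K B<\dim_K F=[F:K]$ (when $[F:K]$ is infinite this is immediate), so $A\ne F$. Finally, by definition of a primitive subspace, $K(b)=F$ for every $b\in B\setminus\{0\}$.

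I would then apply Theorem~\ref{Linear analog of matchings} to these $A$ and $B$. Its hypothesis asks that for every $b\in B\setminus\{0\}$, $A$ contain no nontrivial linear translate of $K(b)$. But $K(b)=F$, and any nontrivial linear translate $cK(b)=cF$ of $F$ is again all of $F$ (in the infinite-degree case it is at least infinite-dimensional over $K$); since $A\ne F$ and $\dim_K A=n<[F:K]$, no such translate is contained in $A$. Hence the hypothesis is satisfied, and Theorem~\ref{Linear analog of matchings} gives that $A$ is matched to $B$, which is the assertion.

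Along this route there is essentially no obstacle once Theorem~\ref{Linear analog of matchings} is in hand, and separability is not even used in the deduction; it enters only because ``primitive subspace'' is the natural notion precisely in the separable setting, where by the primitive subspace theorem the nonprimitive locus is a union of proper intermediate fields and primitive subspaces of the prescribed dimension exist. If instead one prefers a self-contained argument, the plan would be: by duality in $B^{*}$ and Rado's theorem~\cite{Rado}, reduce the statement to the inequality $\dim_K\{x\in B:\ A'x\subseteq A\}\le\dim_K A-\dim_K A'$ for every $K$-subspace $A'\subseteq A$; then, setting $B'=\{x\in B:\ A'x\subseteq A\}$ so that $A'B'\subseteq A$, pass to $\widetilde B:=K\oplus B'$ (a direct sum since $1\notin B$, so $\dim_K\widetilde B=\dim_K B'+1$), note that $\langle A'\widetilde B\rangle\subseteq A$ because $A'\subseteq A$, and apply the linear analogue of the Cauchy--Davenport theorem for separable extensions (the Hou--Leung--Xiang inequality) to $A'$ and $\widetilde B$. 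This gives the inequality up to the $K$-dimension of the stabilizer $H$ of $\langle A'\widetilde B\rangle$, and the main obstacle of this approach is to absorb that term: one uses that $\widetilde B$ contains a primitive element together with $1$ — so that its $H$-span is $H$-periodic and properly larger than $\widetilde B$ unless $H=K$ — together with the bound on $\dim_K H$ furnished by the primitive subspace theorem, and feeds these into the refined Hou--Leung--Xiang estimate. When $F/K$ has no proper intermediate field above $K$, in particular when $[F:K]$ is prime, one automatically has $H=K$ and this last difficulty disappears.
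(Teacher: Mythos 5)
Your main route is correct. Since $B$ is primitive and $n>1$, every nonzero $b\in B$ satisfies $K(b)=F$, every nontrivial linear translate $xK(b)=xF$ with $x\neq 0$ is again all of $F$, and $A\neq F$ (indeed $1\notin B$ forces $B\subsetneq F$, hence $n=\dim_K A=\dim_K B<[F:K]$, also in the infinite-degree case), so the hypothesis of Theorem~\ref{Linear analog of matchings} holds vacuously and that theorem gives the matching. Note, however, that the paper itself offers no proof of Theorem~\ref{primitive matchings}: it is quoted from \cite{Aliabadi 3}. So what you have done differently is to exhibit it as an immediate corollary of Theorem~\ref{Linear analog of matchings}, which the paper proves later from Propositions~\ref{subpaces product} and~\ref{dimension estimate}; this is a clean internal derivation, and it rests on the same engine (the linear Kneser theorem of Bachoc--Serra--Z\'{e}mor together with the Eliahou--Lecouvey dimension criterion) that underlies the original argument, so it buys economy rather than new machinery.

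Two caveats. First, be careful with the remark that separability ``is not even used'': it is invisible only because Proposition~\ref{subpaces product} and Theorem~\ref{Linear analog of matchings} are stated in this paper without a separability hypothesis, whereas the theorem of \cite{Bachoc} they rely on does carry one; so separability is doing real work beneath your deduction and is not merely the ambient setting in which primitive subspaces are natural. Second, your alternative ``self-contained'' plan (duality plus Rado, then a Hou--Leung--Xiang estimate with the stabilizer term to be absorbed) is only a sketch, and the absorption of the stabilizer is precisely the nontrivial point there; since your first argument is complete on its own, this does not affect the verdict, but it should not be presented as an established second proof.
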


Note that a $K$-subspace $B$ of $F$ is called {\it primitive} if $K(\alpha)=F$, for all $\alpha\in B\setminus\{0\}$.

\begin{remark}
It is worth pointing out that if $B$ is a primitive $K$-subspace of $F$, then $K\cap B=\{0\}$ (Here we are assuming that $K\subsetneq F$).
\end{remark}
\begin{example}
Consider the field extension $\mathbb{R}\subset\mathbb{C}$ and the $\mathbb{R}$-subspace $W=\langle i\rangle$ of $\mathbb{C}$, where $\langle i\rangle$ stands for the $\mathbb{R}$-subspace of $\mathbb{C}$ generated by $i$. Then adjoining any nonzero element of $W$ to $\mathbb{R}$ covers the entirety of $\mathbb{C}$. So $W$ is a primitive $\mathbb{R}$-subspace of $\mathbb{C}$.
\end{example}
Motivated by Theorem \ref{primitive matchings} one may ask the size of primitive subspaces. This topic is studied in \cite{Aliabadi 4, Aliabadi 1}. It is proved in  \cite{Aliabadi 4} that if $A$ is a primitive $K$-subspace of $F$ where $K$ is an infinite field, then $\dim_K A\leq [F:K]-\psi(F,K)$,  where
\[\psi(F,K)=\max\left\{[M:K]:\, M\text{ is a proper intermediate field of }K\subset F\right\}.\]
Note that in the above definition, "proper intermediate field of $K\subset F$" means $K\subset  M\subsetneq F$. Hence we have $1\leq\psi(F,K)<[F:K]$.

In particular, the dimension of the largest primitive subspace is given in \cite{Aliabadi 4} in the case the base field is infinite. Later in  \cite{Aliabadi 1}, this result is generalized for all base fields as follows:

\begin{proposition}\label{th2.1}
Let $F, K, n$ and $\psi(F,K)$ be as above. Assume that $K$ is infinite and  $K\subset F$ is simple.  Then 
\begin{align*}
\psi(F,K)+\phi(F,K)=n,
\end{align*}
where
\[\phi(F,K)=\max\left\{\dim_KV:\, V\text{ is a primitive }\text{K-subspace of F}\right\},\]
namely, $\phi(F,K)$ denotes the dimension of the largest primitive subspace.
\end{proposition}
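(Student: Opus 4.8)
The plan is to translate primitivity of a $K$-subspace into a trivial-intersection condition and then invoke Theorem~\ref{general field}. Since $K\subset F$ is simple, write $F=K(\theta)$, and let $\mathcal{F}$ be the collection of all intermediate fields $M$ with $K\subseteq M\subsetneq F$, each regarded as a proper $K$-subspace of $F$. Two facts about $\mathcal{F}$ are needed. First, $\mathcal{F}$ is finite: a finite simple extension has only finitely many intermediate fields, because each such $M$ is generated over $K$ by the coefficients of the minimal polynomial of $\theta$ over $M$, and that polynomial is one of the finitely many monic divisors in $F[x]$ of the minimal polynomial of $\theta$ over $K$. Moreover $\mathcal{F}\neq\emptyset$ since $K\in\mathcal{F}$ (we may assume $n>1$, the case $n=1$ being vacuous). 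Second, directly from the definition of $\psi(F,K)$ one has $s:=\max\{\dim_K M:\ M\in\mathcal{F}\}=\psi(F,K)$.

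Next I would record the reformulation: a $K$-subspace $V$ of $F$ is primitive if and only if $V\cap M=0$ for every $M\in\mathcal{F}$. Indeed, if a nonzero $\alpha$ lies in $V\cap M$ then $K\subseteq K(\alpha)\subseteq M\subsetneq F$, so $V$ is not primitive; conversely, if $V$ is not primitive, pick a nonzero $\alpha\in V$ with $K(\alpha)\neq F$, and then $K(\alpha)\in\mathcal{F}$ meets $V$ nontrivially. Since $K$ is infinite and $\mathcal{F}$ is finite, $\#\mathcal{F}\leq\#K$, so Theorem~\ref{general field} applies with ambient space $F$ and family $\mathcal{F}$: every subspace $T$ of $F$ that is maximal under inclusion among subspaces with $T\cap M=0$ for all $M\in\mathcal{F}$ satisfies $\dim_K T=n-s=n-\psi(F,K)$. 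By the reformulation these $T$ are precisely the inclusion-maximal primitive $K$-subspaces of $F$, and since $F$ is finite dimensional every primitive subspace is contained in one of them. Hence the largest primitive $K$-subspace has dimension $n-\psi(F,K)$, i.e. $\phi(F,K)=n-\psi(F,K)$, which is the assertion $\psi(F,K)+\phi(F,K)=n$.

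The only ingredient that is not pure bookkeeping is the finiteness of the lattice of intermediate fields of a simple extension; once that is available, the proof reduces to a direct application of Theorem~\ref{general field}. Two small points deserve attention. First, Theorem~\ref{general field} produces an \emph{equality} for the dimension of a maximal $T$, so it yields both $\phi(F,K)\leq n-\psi(F,K)$ (recovering the bound of \cite{Aliabadi 4} in the simple case) and the reverse inequality, which is the substantive half. Second, one must observe that inclusion-maximality and dimension-maximality agree here: Theorem~\ref{general field} says all inclusion-maximal such subspaces share the common dimension $n-s$, so the supremum defining $\phi(F,K)$ is attained by any of them. The finiteness of $\mathcal{F}$ together with the hypothesis that $K$ is infinite is exactly what makes the cardinality condition $\#\mathcal{F}\leq\#K$ of Theorem~\ref{general field} hold, so these hypotheses are genuinely used.
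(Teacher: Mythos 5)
Your derivation is correct. The paper does not actually reprove Proposition \ref{th2.1} --- it quotes it from \cite{Aliabadi 1, Aliabadi 4} and frames Theorem \ref{general field} as its generalization --- and your argument is exactly that specialization made explicit: take $\mathcal{F}$ to be the proper intermediate fields of $K\subset F$ (finitely many, by the Steinitz/primitive-element argument you sketch, and nonempty via $K$ itself once $n>1$), observe that a $K$-subspace $V$ is primitive precisely when $V\cap M=0$ for every $M\in\mathcal{F}$, note that $\#\mathcal{F}\leq\#K$ because $K$ is infinite, and conclude $\phi(F,K)=n-s=n-\psi(F,K)$ from the common dimension $n-s$ of the inclusion-maximal trivially-intersecting subspaces, which are themselves primitive. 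The points you single out (finiteness of the intermediate-field lattice, and the agreement of inclusion-maximality with dimension-maximality) are exactly the places where care is needed, and they check out; so this is essentially the route the paper intends rather than a genuinely different one.
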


\begin{example}\label{er}
Consider the finite field extension $\mathbb{Q}\subset \mathbb{Q} (\sqrt{2}, \sqrt{3})$. Then according to Proposition \ref{th2.1}, the dimension of the greatest primitive $\mathbb{Q}$-subspace of $\mathbb{Q} (\sqrt{2}, \sqrt{3})$ is $2$ as $[\mathbb{Q} (\sqrt{2}, \sqrt{3}):\mathbb{Q}]=4$. Thus, $\psi (\mathbb{Q} (\sqrt{2}, \sqrt{3}),\mathbb{Q})=2$.
\end{example}
In Theorem \ref{general field}, we generalize Proposition \ref{th2.1}. Note that the main required tools in the proof of Theorem \ref{general field} are linear covering results vector spaces stated as Lemma \ref{Covering} and Lemma \ref{Main Lemma union} in the next subsection.
\subsection{Linear covering results}
We begin with a well-known linear algebra theorem which asserts that a
vector space over an infinite field cannot be written as a finite union of its
proper subspaces. One can see \cite{Friedland-Aliabadi, Roman} for more details; however, in the case that the base field is finite, this result does not hold. We have the following scenario for the finite base field: let $V$ be a finite-dimensional vector space over $\mathbb{F}_{q}$, where $\mathbb{F}_{q}$ stands for finite field of order $q$, where  $q=p^r$ for some prime $p$ and  $r\in\mathbb{N}$. We say  a collection $\{W_i\}_{i\in I}$ of proper $K$-subspaces of $V$ is  a \textit{linear covering} of $V$ if $V=\underset{i\in I}{\bigcup} W_i$. 
 The \textit{linear covering number} $\mathrm{LC}(V)$ of a vector space $V$ of dimension at least $2$ is the least cardinality $\#I$ of a linear covering $\{W_{i}\}_{i \in I}$ of $V$. Under the condition $\dim_KV\geq2$, which is the sufficient and necessary condition for the existence of linear coverings, we have the following result from \cite{Heden}. See also \cite{Javaheri, Khare, Luh} for more developments on the topic of covering vector spaces.

\begin{lemma}\label{Covering}
If $\dim_KV$ and $\#K$ are not  both infinite, then  $\mathrm{LC}(V)=\#K+1$.%
\end{lemma}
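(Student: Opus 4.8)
The plan is to establish the two inequalities $\mathrm{LC}(V)\le \#K+1$ and $\mathrm{LC}(V)\ge \#K+1$ separately, reading $\#K+1$ throughout as a cardinal number (so it coincides with $\#K$ whenever $K$ is infinite).

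For the upper bound I would reduce to dimension two. Since $\dim_K V\ge 2$, choose a subspace $U\subseteq V$ of codimension $2$ (take $U=0$ if $\dim_K V=2$) and let $\pi\colon V\to V/U$ be the quotient map. Any linear covering $\{L_j\}_j$ of $V/U$ by proper subspaces pulls back to a linear covering $\{\pi^{-1}(L_j)\}_j$ of $V$ of the same cardinality, each $\pi^{-1}(L_j)$ being a hyperplane and hence proper; so it suffices to cover $V/U\cong K^2$. For this one takes the one-dimensional subspaces of $K^2$: there are exactly $\#K+1$ of them (indexed by $\mathbb{P}^1(K)$), every nonzero vector lies on exactly one and $0$ lies on all, so they cover $K^2$. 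This yields $\mathrm{LC}(V)\le \#K+1$ with no finiteness hypothesis whatsoever.

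For the lower bound I would split on $\#K$. If $\#K$ is finite, the upper bound forces $\mathrm{LC}(V)$ to be finite, so I may fix a linear covering $\{W_i\}_{i\in I}$ attaining the minimum; by minimality it is irredundant, so for some $i_0$ there is $v\in W_{i_0}$ with $v\notin W_i$ for all $i\ne i_0$. Choosing $u\notin W_{i_0}$, the affine line $\{u+tv:\ t\in K\}$ misses $W_{i_0}$ entirely (a point $u+tv\in W_{i_0}$ would force $u\in W_{i_0}$) and meets each remaining $W_i$ in at most one point (two points would force $v\in W_i$), so $\#K\le \#(I\setminus\{i_0\})$ and $\mathrm{LC}(V)\ge \#K+1$. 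If $\#K$ is infinite, then by hypothesis $\dim_K V=n<\infty$, and I would argue by contradiction from a covering $\{W_i\}_{i\in I}$ with $\#I<\#K$: a pigeonhole argument on a transversal line $\{u+tv\}$ produces a single $W_i$ containing two of its points, hence containing $\langle u,v\rangle$; as $u,v$ were arbitrary, every $2$-dimensional subspace of $V$ lies in some $W_i$. A short induction on dimension — passing from $k$ to $k+1$ by fixing a codimension-two subspace of the given $(k+1)$-dimensional one and applying pigeonhole to the $\#K+1$ subspaces lying strictly between them — then shows every subspace of $V$ of dimension at most $n$ lies in some $W_i$; taking dimension $n$ says some $W_i=V$, contradicting properness. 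Hence $\mathrm{LC}(V)\ge \#K=\#K+1$.

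The main obstacle is the infinite-field case of the lower bound: there a minimum covering need not be finite, so the clean irredundancy-plus-transversal-line argument no longer applies and must be replaced by the dimension induction above. This is exactly where the hypothesis enters — it is the finiteness of $\dim_K V$ that makes the induction terminate — and the hypothesis is genuinely needed, since over an infinite field a vector space of large enough infinite dimension can fail to have covering number $\#K+1$. Everything else is routine: the upper bound is an explicit construction, and the finite-field lower bound is the classical fact that a vector space is never the union of fewer than $\#K+1$ of its proper subspaces.
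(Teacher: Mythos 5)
Your proof is correct. Note, though, that the paper itself does not prove this lemma: it is quoted from the literature (cited to Heden; see also the Khare and Luh references for the covering-number statement), so there is no internal argument to compare yours against. What you supply is a sound self-contained proof, and it is the standard one: the upper bound by pulling back the $\#K+1$ lines of a two-dimensional quotient; the finite-field lower bound via an irredundant minimum covering and a transversal affine line $\{u+tv\colon t\in K\}$ meeting each remaining subspace at most once; and, for infinite $K$ with $\dim_K V=n<\infty$, the cardinality pigeonhole on such a line showing every $2$-dimensional subspace lies in a single covering member, followed by the induction through the $\#K+1$ intermediate subspaces between a codimension-two subspace of $U$ and $U$, terminating at dimension $n$ and contradicting properness. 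All the small points check out: minimality does give irredundancy (any omissible member contradicts minimality of the cardinality), the chosen $v$ is automatically nonzero since $0$ lies in every subspace of the covering, two distinct hyperplanes of $U$ through the fixed codimension-two subspace span $U$, and your closing remark on necessity of the hypothesis is right (for instance, a space of countably infinite dimension over $\mathbb{R}$ is the union of a countable chain of proper subspaces, so its covering number is $\aleph_0<\#\mathbb{R}+1$).
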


Having the covering theorem for infinite base fields along with Lemma \ref{Covering} at hand, we obtain the following covering result for  arbitrary base fields.

\begin{lemma}\label{Main Lemma union}
Let $V$ be a finite-dimensional vector space over a field $K$ and let $\mathcal{V}=\{V_i\}_{i=1}^{m}$ be a finite family of subspaces of $V$ where $m\leq\#K$. Then $V\neq\cup_{i=1}^{m}V_{i}$.
\end{lemma}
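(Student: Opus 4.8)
The plan is to split into two cases according to whether the base field $K$ is finite or infinite, and reduce each to a known covering statement. First suppose $K$ is infinite. Then regardless of whether $\dim_K V$ is finite or infinite, the classical theorem that a vector space over an infinite field is not a finite union of proper subspaces applies directly: since each $V_i$ is a proper subspace and $m$ is finite, we get $V \neq \bigcup_{i=1}^m V_i$ immediately, with no need for the hypothesis $m \leq \#K$ in this case.

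Now suppose $K$ is finite, say $\#K = q$. Here the hypothesis $m \leq \#K = q$ is essential. If $\dim_K V \leq 1$ then $V$ has no proper subspaces other than $\{0\}$ (if $\dim_K V = 1$) or is itself $\{0\}$, so a union of proper subspaces cannot exhaust $V$; more carefully, if $\dim_K V = 1$ the only proper subspace is $\{0\}$, so $\bigcup V_i = \{0\} \neq V$, and if $\dim_K V = 0$ the statement is vacuous or trivial since $V$ has no proper subspaces at all. So assume $\dim_K V \geq 2$. Then Lemma \ref{Covering} applies and gives $\mathrm{LC}(V) = \#K + 1 = q+1$. By definition of the linear covering number, any family of proper subspaces whose union is all of $V$ must have cardinality at least $q+1$. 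Since our family $\mathcal{V}$ has $m \leq q < q+1$ members, it cannot be a linear covering, i.e. $V \neq \bigcup_{i=1}^m V_i$, as desired.

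I do not anticipate a serious obstacle here; the lemma is essentially a bookkeeping synthesis of the infinite-field covering theorem and Lemma \ref{Covering}. The one point requiring a little care is the boundary behavior when $\dim_K V \leq 1$, where Lemma \ref{Covering} does not apply (it requires $\dim_K V \geq 2$) and one must argue separately — but this case is trivial because such a $V$ has essentially no proper subspaces to form a covering from. A second minor point worth stating explicitly is why the finiteness of $m$ (not merely $m \le \#K$) suffices in the infinite-field case: the standard avoidance argument produces, from finitely many proper subspaces, a vector outside all of them, and this is exactly the cited classical result, so no extra work is needed.
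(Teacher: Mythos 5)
Your argument is correct and follows essentially the same route as the paper: split on whether $K$ is finite or infinite, invoke the classical theorem that a vector space over an infinite field is not a finite union of proper subspaces in the infinite case, and use Lemma \ref{Covering} (i.e.\ $\mathrm{LC}(V)=\#K+1>m$) in the finite case. Your explicit treatment of the boundary case $\dim_K V\leq 1$ is a minor refinement the paper leaves implicit, but it does not change the approach.
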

\begin{proof}
In case $K$ is finite, the proof is an immediate consequence of Lemma \ref{Covering}. If $K$ is infinite, the proof follows from Theorem 1.2 in \cite{Roman}.
\end{proof}

The following short lemma will be used in the proof of Theorem \ref{general field}:

\begin{lemma}\label{Sum of subspaces}
Let $A$, $B$ and $C$ be subspaces of a vector space $V$, and suppose $A\cap B=0$ and $(A+B)\cap C=0$. Then $(A+C)\cap B=0$.
\end{lemma}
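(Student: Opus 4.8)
The statement to prove is Lemma~\ref{Sum of subspaces}: if $A\cap B=0$ and $(A+B)\cap C=0$, then $(A+C)\cap B=0$.

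The plan is to argue directly by chasing an arbitrary vector in the intersection. Take $v\in(A+C)\cap B$. Then on one hand $v\in B$, and on the other hand we can write $v=a+c$ with $a\in A$ and $c\in C$. Rearranging gives $c=v-a$, where $v\in B$ and $a\in A$, so $c\in(A+B)\cap C$. The hypothesis $(A+B)\cap C=0$ forces $c=0$, hence $v=a\in A$. But then $v\in A\cap B$, and since $A\cap B=0$ we conclude $v=0$. Therefore $(A+C)\cap B=0$.

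The only point requiring a moment's care is the step $c=v-a\in A+B$: this uses that $B$ is a subspace, so $v\in B$ implies $-v\in B$ and thus $v-a=-( a-v)$, equivalently $c=v+(-a)\in B+A=A+B$. After that, the two hypotheses are applied in sequence, first killing the $C$-component and then the $B$-component. There is no real obstacle here; the argument is a short symmetric linear-algebra manipulation, and the lemma is essentially a statement that a certain direct-sum condition is insensitive to swapping the roles of $B$ and $C$. I would present it as the four-line computation above, perhaps remarking that by symmetry one also gets $(C+B)\cap A=0$, i.e. the three subspaces $A,B,C$ are in fact in internal direct sum, which is the form in which it will be convenient to invoke it in the proof of Theorem~\ref{general field}.
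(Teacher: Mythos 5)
Your proof is correct and is essentially the same element-chasing argument as the paper's: write a vector of $(A+C)\cap B$ as $a+c$, observe $c\in(A+B)\cap C=0$, then use $A\cap B=0$. The only cosmetic difference is that the paper frames it as a contradiction with a nonzero $b$, while you argue directly that the vector must be zero.
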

\begin{proof}
Assume to the contrary that $(A+C)\cap B\neq0$. Let $b\in (A+C)\cap B$, where $b\neq0$, and write $b=a+c$, with $a\in A$ and $c\in C$.

Then $c=b-a$ lies in $C\cap (A+B)=0$, so $c=0$. Thus $a=b$ lies in $A\cap B=0$. Therefore $a=0=b$. This contradicts the fact that $b\neq0$.
\end{proof}

In the proof of Theorem \ref{general field}, we assume that $dim_K{V}=n>2$. The cases $n=1$ and $n=2$ are straightforward to verify. 

\begin{proof}[Proof of Theorem \ref{general field}]
 Let $t=\dim_K(T)$. If $S\in\mathcal{F}$, then $T\cap S=0$, so 
\[n=\dim_KV\geq \dim_K(T+S)=\dim_K(T)+\dim_K(S)=t+\dim_K(S),\]
and thus $\dim_K(S)\leq n-t$ for all $S\in\mathcal{F}$. Thus $s\leq n-t$, and so $t\leq n-s$.

To complete the proof, we show that $t=n-s$. Otherwise, $t<n-s$, so $n>s+t$. Then
\[\dim_K(V)=n>s+t\geq\dim_K(S)+\dim_K(T)\geq\dim_K(S+T),\]
for all $S\in\mathcal{F}$. Then $S+T$ is a proper subspace of $V$ for all $S\in\mathcal{F}$. 

By Lemma \ref{Main Lemma union}, there exists a vector $v\in V$ such that $v\notin S+T$, and thus $(S+T)\cap Kv=0$ for all $S\in\mathcal{F}$. Also, by the definition of $T$, we have $S\cap T=0$, and thus by Lemma \ref{Sum of subspaces}, we have $(Kv+T)\cap S=0$ for all $S\in\mathcal{F}$. Now $v\notin S+T$, so $v\notin T$, and thus $T\subsetneq Kv+T$. This contradicts the definition of $T$.
\end{proof}

Observe that the condition $m\leq\#K$ in Lemma \ref{Main Lemma union} also appears in Theorem \ref{general field} as the covering theorem for vector spaces over finite fields plays a crucial role in the proofs of Theorem \ref{general field}. However, we do not encounter such a restriction when the base field is infinite. Inspired by this observation, to determine whether or not the condition $\# \mathcal{F}\leq\#K$ is removable from Theorem \ref{general field}, one may take finite-dimensional vector spaces over finite fields into account. In what follows, we first determine by an example that in Lemma Theorem \ref{general field} the condition $m\leq\#K$ cannot be relaxed. Our example signifies that the upper bound $\#K$ for the number of subspaces is strict.

\begin{example}
Consider the vector space $\mathbb{F}_{2}^{2}$ over $\mathbb{F}_{2}$ and $\mathbb{F}_{2}$-subspaces\linebreak $V_1$=$\{(0,0), (1,0)\}$, $V_2$=$\{(0,0), (0,1)\}$ and $V_3$=$\{(0,0), (1,1)\}$. The family $\mathcal{V}=\{V_i\}_{i=1}^{3}$ violates the condition $\# \mathcal{F}\leq\#K$ in Theorem \ref{general field}. The largest possible dimension of a subspace $W$ of $\mathbb{F}_{2}^{2}$ which intersects every $V_i$ trivially is zero. Hence, Theorem \ref{general field} fails for $\mathcal{V}$.
\end{example}

\begin{question}\label{Conjecture}
Let $V$ be an $n$-dimensional vector space over a field $K$ and let $\mathcal{V}=\{V_i\}_{i<n,i\mid n}$ be a finite family of subspaces of $V$ indexed by
positive proper divisors of $n$ that satisfy the following two properties: 

i) $dim_KV_i=i,$

ii) $V_i\cap V_j=V_\text{gcd(j,j)}$.

Then is it true that the dimension of the largest possible subspace of $V$ which intersects every member of $\mathcal{V}$  trivially is given by the following?
\[\dim_KV-\text{the largest divisor of}\ n.\]
\end{question}

 \begin{remark}
Along the same line of reasoning as in the proof of Theorem \ref{general field}, one may prove the question in the case of an infinite field by invoking the fact that a vector  space  over  an  infinite field cannot be written as a finite union of  its proper subspaces. Therefore, everything boils down to the case where the base field is finite. We believe that in order to handle this case, we require stronger tools than covering results for vector spaces over finite fields.
\end{remark}

\subsection{A connection to a group theory result}
There has been a vast literature as well as ongoing investigations on linear
analogues of existing results in group theory. As a case  in point, a recent result  due to Bachoc et al \cite{Bachoc}, gives the linearization of a theorem of Kneser on the size of certain subsets of an abelian group. We consider the following scenario in group theory.

Let $\mathbb{Z}/p^r\mathbb{Z}$ denote the cyclic group of order $p^r$, where $p$ is a prime and $r\in \mathbb{N}$. Denote the order of greatest proper subgroup of $\mathbb{Z}/p^r\mathbb{Z}$ by $\psi(\mathbb{Z}/p^r\mathbb{Z})$, and denote the number of generators of $\mathbb{Z}/p^r\mathbb{Z}$ by $\phi(\mathbb{Z}/p^r\mathbb{Z})$. Since $p$-groups have subgroups of index $p$, then $\psi(\mathbb{Z}/p^r\mathbb{Z})=p^{r-1}$. Also, it is well known that $\phi(\mathbb{Z}/p^r\mathbb{Z})=\varphi(p^r)$, where $\varphi$ stands for Euler's totient function. According to Euler's product formula, $\varphi(p^r)=p^r\left(1-\frac{1}{p}\right)=p^r-p^{r-1}$. Therefore,
\begin{align} \label{primitive subspace theorem}
\psi(\mathbb{Z}/p^r\mathbb{Z})+\phi(\mathbb{Z}/p^r\mathbb{Z})=p^r.
\end{align}
Note that Proposition \ref{th2.1} can be regarded as a linear analogue of relation \eqref{primitive subspace theorem}. 

Indeed, the linear analogues of ``the  order  of a group'', ``the order of its largest proper subgroup'' and ``the number of generators of a cyclic group'' are ``the degree of a field extension'', ``the degree of its largest proper intermediate subfield'' and ``the dimension of the largest primitive vector space'', respectively.

The linear analogue of the group theory result presented
as Proposition \ref{th2.1} (for general base fields)  seems to be a better result than the original group theory theorem
which only applies to cyclic groups of ``prime power'' order, as it applies to all finite degree extensions $F/K$ that are 	``cyclic'' (i.e., monogenic as a $K$-algebra). Formulating a reasonable analogue for all finite cyclic groups shall be possible in some ways. 

\subsection{Partitioning finite fields}
Consider the field extension $\mathbb{F}_q\subset\mathbb{F}_{q^n}$, where $n\in \mathbb{N}$ and $q=p^{r}$ for some prime $p$ and $r \in \mathbb{N}$. Let $V$ be an $\mathbb{F}_q$-subspace of $\mathbb{F}_{q^n}$. We call a set $\mathcal{P}=\{W_i\}_{i=1}^\ell$ of $\mathbb{F}_q$-subspaces of $\mathbb{F}_{q^n}$ a \textit{partition} of $V$ if every nonzero element of $V$ is in $W_i$ for exactly one $i$. See \cite{Heden} for more results on partitions of finite vector spaces.

In the following observation, we provide a partition for $\mathbb{F}_{q^n}$ using its primitive $\mathbb{F}_q$-subspaces.

\begin{observation}\label{partition}
Consider the field extension $\mathbb{F}_q\subset\mathbb{F}_{q^n}$. Let $M$ be an intermediate subfield of $\mathbb{F}_q\subset\mathbb{F}_{q^n}$ for which $\psi(\mathbb{F}_{q^n}, \mathbb{F}_q)=[M:K]$. Let $W$ be a $\mathbb{F}_q$-primitive subspace of $\mathbb{F}_{q^n}$ such that $\phi(\mathbb{F}_{q^n}, \mathbb{F}_q)=\dim_{\mathbb{F}_q}W$. Assume that $W$ has a subspace partition $\{W_1,\ldots,W_l\}$, where $\dim_{\mathbb{F}_q}W_i=t_i\leq\psi(\mathbb{F}_{q^n},\mathbb{F}_q)$, for $1\leq i\leq l$. Then, for each $\alpha\in M\setminus\{0\}$, one can define a $t_i$-dimensional subspace $W_{i_\alpha}$ of $\mathbb{F}_{q^n}$ such that $W$, $M$ and the subspaces $W_{i_\alpha}$ form a partition of $\mathbb{F}_{q^n}$.
\end{observation}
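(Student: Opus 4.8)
The plan is to exhibit the partition explicitly. First I would record the structural information at our disposal: since $M$ is an intermediate subfield with $[M:\mathbb{F}_q]=\psi(\mathbb{F}_{q^n},\mathbb{F}_q)$ and $W$ is a primitive $\mathbb{F}_q$-subspace with $\dim_{\mathbb{F}_q}W=\phi(\mathbb{F}_{q^n},\mathbb{F}_q)$, Proposition \ref{th2.1} gives $\dim_{\mathbb{F}_q}M+\dim_{\mathbb{F}_q}W=n$; moreover the primitivity of $W$ forces $M\cap W=\{0\}$ (every nonzero element of $W$ generates all of $\mathbb{F}_{q^n}$, hence cannot lie in the proper subfield $M$), so a dimension count yields the internal direct sum decomposition $\mathbb{F}_{q^n}=M\oplus W$ as $\mathbb{F}_q$-vector spaces. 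This is the backbone on which everything else rests.

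Next I would use the multiplicative structure. For a fixed $\alpha\in M\setminus\{0\}$ and each block $W_i$ of the given subspace partition of $W$, set $W_{i_\alpha}:=\alpha W_i=\{\alpha w : w\in W_i\}$. Since multiplication by $\alpha$ is an $\mathbb{F}_q$-linear bijection of $\mathbb{F}_{q^n}$, each $W_{i_\alpha}$ is a $t_i$-dimensional $\mathbb{F}_q$-subspace. I would then argue that the collection consisting of $M$, $W$, and all the $W_{i_\alpha}$ for $\alpha$ ranging over $M\setminus\{0\}$ and $i$ ranging over $1,\dots,l$ covers every nonzero element of $\mathbb{F}_{q^n}$ exactly once. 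Take $x\in\mathbb{F}_{q^n}\setminus\{0\}$ and write it uniquely as $x=m+w$ with $m\in M$, $w\in W$. If $w=0$ then $x=m\in M\setminus\{0\}$, and $x$ lies in no $W_{i_\alpha}$ and not in $W$, so it is covered exactly once. If $w\neq 0$, then $w$ lies in exactly one block, say $W_j\setminus\{0\}$. The point is to show $x=m+w$ lies in exactly one $W_{i_\alpha}$ and in no other member of the family. For that I would rewrite $x=\alpha\,(\alpha^{-1}m+\alpha^{-1}w)$ for a suitable $\alpha$; the idea is that $\alpha^{-1}w$ should fall in a prescribed block and $\alpha^{-1}m$ should vanish — but $\alpha^{-1}m=0$ only when $m=0$. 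So the decomposition $x=m+w$ needs to be massaged: one wants $x\in\alpha W_i$ iff $\alpha^{-1}x\in W_i\subseteq W$, i.e. iff $\alpha^{-1}x$ has zero $M$-component, i.e. iff $\alpha^{-1}m=0$ in the $M\oplus W$ decomposition — which again forces $m=0$. This shows the naive choice $W_{i_\alpha}=\alpha W_i$ only captures elements of $W$ itself, so it cannot be literally right.

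I expect this to be the main obstacle, and I would resolve it by choosing the $W_{i_\alpha}$ to be graphs of linear maps rather than dilates of the $W_i$. Concretely, partition $M\setminus\{0\}=\{\alpha_1,\dots,\alpha_{q^\psi-1}\}$ and, using the block decomposition $W\setminus\{0\}=\bigsqcup_i (W_i\setminus\{0\})$, define for each $\alpha\in M\setminus\{0\}$ and each $i$ the set $W_{i_\alpha}:=\{\alpha c + w : w\in W_i\}$ where $c$ is a fixed nonzero element of $W$ — or, more robustly, define $W_{i_\alpha}$ directly as the set of $x=m+w$ whose nonzero $M$-component $m$ equals $\alpha$ (up to scalar) and whose $W$-component lies in $W_i$; one then checks this is an affine-linear object that can be linearized after the right normalization. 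The verification that every nonzero $x=m+w$ with $m\neq 0$ lands in exactly one such piece is then a bookkeeping argument: the $M$-component $m$ selects the relevant $\alpha$ (up to the $\mathbb{F}_q^\times$-action), and the $W$-component $w$ selects the block $W_i$; the case $m=0$ is absorbed by $W$ and the case $w=0$ by $M$. Disjointness across distinct $(\alpha,i)$ follows from uniqueness of the $M\oplus W$ decomposition together with the fact that the $W_i$ partition $W$. The only genuinely delicate point is arranging that each $W_{i_\alpha}$ is an honest $\mathbb{F}_q$-subspace of the stated dimension $t_i$ and not merely an affine coset, and I would handle that by presenting $W_{i_\alpha}$ as $\{\,t_\alpha(w) : w\in W_i\,\}$ for an explicit injective $\mathbb{F}_q$-linear map $t_\alpha:W\to\mathbb{F}_{q^n}$ depending on $\alpha$, chosen so that the images over all $\alpha$ and all blocks tile $\mathbb{F}_{q^n}\setminus(M\cup W)$. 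Once the map $t_\alpha$ is pinned down, the subspace property, the dimension count, and the partition property all follow immediately from the $M\oplus W$ decomposition and the hypothesis that $\{W_1,\dots,W_l\}$ partitions $W$.
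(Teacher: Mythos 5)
Your setup is right --- Proposition \ref{th2.1} plus primitivity of $W$ does give $\mathbb{F}_{q^n}=M\oplus W$, and ``graphs of linear maps'' is indeed the correct resolution --- but the proposal stops exactly where the actual proof has to start: you never produce the subspaces $W_{i_\alpha}$. Your two concrete attempts do not work as written: $\{\alpha c+w:\,w\in W_i\}$ is an affine coset, not a subspace, and the set of $x=m+w$ with $m$ ``equal to $\alpha$ up to scalar'' is likewise not a subspace and is indexed by projective classes of $\alpha$ rather than by all $\alpha\in M\setminus\{0\}$ as the statement requires; the final fallback, an unspecified injective linear $t_\alpha:W\to\mathbb{F}_{q^n}$ ``chosen so that the images tile,'' simply postulates the conclusion. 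Moreover a single map $t_\alpha$ defined on all of $W$ cannot in general be of the needed graph form, since $\dim_{\mathbb{F}_q}W=\phi$ may exceed $[M:\mathbb{F}_q]=\psi$; the maps must be chosen block by block, and this is precisely where the hypothesis $t_i\le\psi(\mathbb{F}_{q^n},\mathbb{F}_q)$ enters --- a hypothesis your argument never uses, which is a reliable sign the key step is missing. (Your analysis of why $\alpha W_i$ fails is also off: multiplication by $\alpha^{-1}$ does not preserve the $M\oplus W$ decomposition, so ``$\alpha^{-1}x$ has zero $M$-component iff $\alpha^{-1}m=0$'' is not valid; but this is incidental.)

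The missing construction, which is the paper's one-line idea, is: for each block choose an injective $\mathbb{F}_q$-linear map $T_i:W_i\to M$ (possible exactly because $t_i\le[M:\mathbb{F}_q]$) and set $W_{i_\alpha}=\{w+\alpha T_i(w):\,w\in W_i\}$ for $\alpha\in M\setminus\{0\}$. Each $W_{i_\alpha}$ is the image of $W_i$ under an injective linear map, hence a $t_i$-dimensional subspace. For the partition property, write a nonzero $x$ uniquely as $x=m+w$ with $m\in M$, $w\in W$: if $w=0$ or $m=0$ then $x$ lies in $M$ only, respectively $W$ only, and in no $W_{i_\alpha}$ (since $w'+\alpha T_i(w')$ has zero $M$- or $W$-component only when $w'=0$); if both are nonzero, then $w$ lies in a unique block $W_i$, $T_i(w)\ne 0$, and the field structure of $M$ recovers the unique scalar $\alpha=m\,T_i(w)^{-1}\in M\setminus\{0\}$ with $x\in W_{i_\alpha}$. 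Note that $\alpha$ is recovered by division in $M$, not merely ``up to the $\mathbb{F}_q^\times$-action'' as in your bookkeeping sketch. Without this (or an equivalent) explicit construction, the proposal does not constitute a proof.
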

\begin{proof}
For each subspace $W_i$, $1\leq i\leq l$, let $T_i$ be a 1-1 linear transformation from $W_i$ into $M$. For each $\alpha\in M\setminus\{0\}$, we associate with it  the following set:
\begin{align*}
W_{i_\alpha}=\{w+\alpha T_i(w):\ w\in W_i\}.
\end{align*}
Since $W_i\cap M=\{0\}$ and $W_i\cap W_j=\{0\}$, for $i\neq j$, one can easily verify that the $W_{i_\alpha}$'s, $M$ and $W$ form a partition of $\mathbb{F}_{q^n}$ into  subspaces.
\end{proof}

\section{The Linear Matching Property, Improved}\label{The Linear Matching Property}

Our main goal in this section is to formulate and prove a linear analogue of Proposition \ref{Matching generalziation}.
  For this purpose, we employ  the following result from \cite{Bachoc} which  is the linear version of a famous theorem due to Kneser \cite[page 116, Theorem 4.3]{Nathanson}. Note that in the following theorem, $\langle AB\rangle$ stands for the $K$-subspace of $F$ spanned by the subset \[
AB=\{ab:\ a\in A\, \text{and}\, b\in B\}
,\]
which is the Minkowski product of the subspaces $A$ and $B$.

\begin{proposition}\label{subpaces product}
Let $K\subset F$ be a field extension, and let $A,B\subset F$ be nonzero finite-dimensional $K$-subspaces of $F$. Let $M$ be the subfield of $F$ which stabilizes $ AB$, i.e. $M=\{x\in F: x\langle AB\rangle \subset \langle AB \rangle\}$. Then 
\begin{align*}
\dim_K \langle AB \rangle \geq\dim_K A+\dim_K B-\dim _K M.
\end{align*}
\end{proposition}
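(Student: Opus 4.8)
The plan is to transplant Kneser's classical proof of the addition theorem — in the inductive form given in \cite[Theorem~4.3]{Nathanson} — to the multiplicative setting, replacing sumsets of subsets by Minkowski products of $K$-subspaces and the Dyson $e$-transform by a linear transform of pairs of subspaces. Write $C=\langle AB\rangle$ and $M=\{x\in F:xC\subseteq C\}$. Since $C$ is finite-dimensional over $K$, multiplication by any nonzero element of $M$ is an injective, hence bijective, $K$-linear self-map of $C$, so $M$ is a subfield of $F$ containing $K$; in particular $\dim_K M\ge1$, and $M$ is unaffected by replacing $C$ with $\lambda C$ for $\lambda\in F^{*}$.

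I would first normalize: replacing $A$ by $a^{-1}A$ and $B$ by $b^{-1}B$ for nonzero $a\in A$, $b\in B$ scales $C$ by $(ab)^{-1}$ and changes neither the dimensions of $A$, $B$, $C$ nor $M$, so we may assume $1\in A\cap B$, whence $A,B\subseteq C$. Then I would induct on $\dim_K B$. The case $\dim_K B=1$ is immediate (then $C=Ab$, so $\dim_K C=\dim_K A=\dim_K A+\dim_K B-1$), and if $\dim_K C\ge\dim_K A+\dim_K B-1$ the inequality already holds since $\dim_K M\ge1$; so assume $\dim_K B\ge2$ and $\dim_K C\le\dim_K A+\dim_K B-2$.

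For the inductive step I would, for $t\in F^{*}$, set $A_t=A+tB$ and $B_t=B\cap t^{-1}A$. The modular law gives $\dim_K A_t+\dim_K B_t=\dim_K(A+tB)+\dim_K(A\cap tB)=\dim_K A+\dim_K B$, and a direct check shows $\langle A_tB_t\rangle\subseteq C$: for $a\in A$, $b'\in B$, $b''\in B_t$ (so $tb''\in A$) one has $(a+tb')b''=ab''+b'(tb'')\in AB+BA\subseteq C$. Since $1\in B_1=A\cap B$, at least one such transform has $B_t\neq0$. Now either some admissible $t$ has $0<\dim_K B_t<\dim_K B$, in which case $\dim_K A_t>\dim_K A$, the induction hypothesis applies to $(A_t,B_t)$, and — this is the crucial point — one shows the resulting stabilizer has $K$-dimension at most $\dim_K M$, so that $\dim_K C\ge\dim_K\langle A_tB_t\rangle\ge\dim_K A+\dim_K B-\dim_K M$; or else, for every $t\in F^{*}$, the space $tB\cap A$ is either $0$ or all of $tB$, a rigidity statement from which one must extract a subfield $L\subseteq M$ with $\dim_K L\ge\dim_K A+\dim_K B-\dim_K C$, which rearranges to the claim.

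The main obstacle is precisely the stabilizer bookkeeping that makes the induction close: controlling $\operatorname{Stab}\langle A_tB_t\rangle$ against $M$ in the recursive case, and producing a subfield of the right size in the boundary case — the genuinely delicate core of every Kneser-type argument, and the reason this statement requires a dedicated paper. A further wrinkle particular to field extensions is that inseparable subextensions can obstruct the naive argument; I would circumvent this by proving the inequality first after base change to a perfect overfield of $F$, where the relevant stabilizers are well behaved, and then descending, or by working throughout with the separable closure of $M$ in $F$.
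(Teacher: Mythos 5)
This proposition is not proved in the paper at all: it is imported verbatim from \cite{Bachoc}, where it is the main theorem (the linear Kneser theorem without any separability hypothesis), so there is no internal proof to compare yours against. Judged on its own, your proposal is an accurate road map of Kneser's classical argument transported to the linear setting --- the normalization $1\in A\cap B$, the induction on $\dim_K B$, and the linear $e$-transform $A_t=A+tB$, $B_t=B\cap t^{-1}A$ with $\dim_K A_t+\dim_K B_t=\dim_K A+\dim_K B$ and $\langle A_tB_t\rangle\subseteq\langle AB\rangle$ are all set up correctly --- but it is not a proof. The two steps you yourself flag as ``the main obstacle,'' namely (i) bounding the stabilizer of $\langle A_tB_t\rangle$ against $M$ so that the inductive inequality for $(A_t,B_t)$ transfers back to $(A,B)$, and (ii) extracting, in the rigid case where every $tB\cap A$ is $0$ or $tB$, a subfield $L$ with $\dim_K L\geq\dim_K A+\dim_K B-\dim_K\langle AB\rangle$ that actually sits inside the stabilizer $M$ of $\langle AB\rangle$, are precisely the content of the theorem; deferring them leaves the argument without its core.

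The separability remark is also not a wrinkle one can wave away. The $e$-transform/derivation route is essentially the Hou--Leung--Xiang proof, which is only known to work when the algebraic closure of $K$ in $F$ is separable over $K$; whether the statement held without that hypothesis was an open conjecture of Hou for over a decade, settled in \cite{Bachoc} by a genuinely different argument. Your proposed fix --- base change to a perfect overfield and descend --- does not obviously work: for an inseparable extension the ring $F\otimes_K K^{1/p^{\infty}}$ need not be reduced, so $K$-subspaces of $F$ do not extend to subspaces of a field of the same codimension, stabilizers do not transfer, and no descent mechanism is provided; ``working with the separable closure of $M$ in $F$'' is likewise unspecified, and $M/K$ itself may be purely inseparable. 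In short, the skeleton is right for the separable case but the load-bearing steps are missing, and the general case as stated requires the machinery of \cite{Bachoc} rather than a repaired $e$-transform.
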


For nonempty subsets $C$ and $D$ of $F$ we have $K\langle C\cup B \rangle=K\langle C \rangle+K\langle D\rangle$, the sum of two subspaces $K\langle C \rangle$ and $K\langle D\rangle$. We have also $K\langle CD \rangle=K(K\langle C\rangle K\langle B\rangle)$.

The following theorem by Eliahou and Lecouvey, which formulates the matching property in terms of suitable dimension estimates, is also the engine behind our proof.

\begin{proposition}\label{dimension estimate}
Let $K\subset F$ be a field extension and $A$ and $B$ be two $n$-dimensional $K$-subspaces of $F$. Suppose that $\mathcal{A}=\{a_1,\ldots,a_n\}$ is a basis of $A$. Then $\mathcal{A}$ can be matched to a basis of $B$ if and only if, for all $J\subset \{1,\ldots,n\}$, we have:
\begin{align*}
\dim_K\bigcap_{i\in J}\left(a_i^{-1}A\cap B\right)\leq n-\# J.
\end{align*}
\end{proposition}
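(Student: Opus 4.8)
The plan is to recast the matching condition in dual terms and then recognize the resulting question as an independent transversal problem, to which Rado's theorem applies directly. First I would fix notation: set $W_i = a_i^{-1}A\cap B$, a $K$-subspace of $B$, and work inside the $n$-dimensional dual space $B^*$. For any basis $\mathcal{B}=\{b_1,\dots,b_n\}$ of $B$ with dual basis $\{b_1^*,\dots,b_n^*\}$ one has $\langle b_1,\dots,\hat b_i,\dots,b_n\rangle=\ker b_i^*$, so the matching condition $W_i\subset\langle b_1,\dots,\hat b_i,\dots,b_n\rangle$ is equivalent to $b_i^*\in W_i^{\perp}$, where $W_i^{\perp}=\{f\in B^*: f|_{W_i}=0\}$ is the annihilator. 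Hence $\mathcal{A}$ can be matched to some basis of $B$ if and only if one can choose $f_i\in W_i^{\perp}$ for $1\le i\le n$ with $\{f_1,\dots,f_n\}$ linearly independent: such a family is automatically a basis of $B^*$, and its dual basis in $B$ is the sought $\mathcal{B}$. This reduces the proposition to selecting an independent transversal in the linear matroid on $B^*$ from the prescribed subspaces $W_1^{\perp},\dots,W_n^{\perp}$, with the index of each chosen functional respected.

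The forward implication is then immediate. If such a basis $\mathcal{B}$ exists, then for every $J\subseteq\{1,\dots,n\}$ we have $\bigcap_{i\in J}W_i\subseteq\bigcap_{i\in J}\ker b_i^*=\langle b_j: j\notin J\rangle$, and therefore $\dim_K\bigcap_{i\in J}W_i\le n-\#J$, which is exactly the asserted inequality.

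For the converse I would invoke Rado's theorem \cite{Rado} for the linear matroid, which guarantees an independent transversal $f_i\in W_i^{\perp}$ precisely when $\operatorname{rank}\bigl(\bigcup_{i\in J}W_i^{\perp}\bigr)\ge \#J$ for all $J$. Here the rank is the dimension of the span $\sum_{i\in J}W_i^{\perp}$, and the essential bookkeeping step is the annihilator duality $\sum_{i\in J}W_i^{\perp}=\bigl(\bigcap_{i\in J}W_i\bigr)^{\perp}$, valid because $B$ is finite-dimensional. Combining this with $\dim_K U^{\perp}=n-\dim_K U$ yields $\operatorname{rank}\bigl(\bigcup_{i\in J}W_i^{\perp}\bigr)=n-\dim_K\bigcap_{i\in J}W_i$, so Rado's condition reads exactly $\dim_K\bigcap_{i\in J}W_i\le n-\#J$, which is our hypothesis; an independent transversal therefore exists and the desired basis $\mathcal{B}$ is obtained. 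The main obstacle is this converse direction, and within it the correct translation of Rado's combinatorial rank inequality into the stated intersection bound: once the dictionary $\bigl(\bigcap_{i\in J}W_i\bigr)^{\perp}=\sum_{i\in J}W_i^{\perp}$ is in place, everything lines up. To sidestep any concern about an infinite ground set when $K$ is infinite, I would apply Rado to finite bases of the subspaces $W_i^{\perp}$, which have the same spans and hence the same ranks as the $W_i^{\perp}$ themselves.
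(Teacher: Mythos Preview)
The paper does not give its own proof of this proposition; it is quoted as a result of Eliahou and Lecouvey \cite{Eliahou 3} and used as a black box. Your argument is correct, and the strategy---pass to annihilators in $B^*$ and invoke Rado's theorem to find an independent transversal of the family $\{W_i^\perp\}$---is precisely the machinery the paper itself deploys in the proof of Theorem~\ref{linear algebra}, where the inequality $\dim\bigcap_{i\in J}U_i\le n-\#J$ is dualized to $\dim\sum_{i\in J}U_i^\perp\ge\#J$ and Theorem~\ref{transversal} is applied. Since the paper states Rado's theorem (Theorem~\ref{transversal}) directly for families of subspaces rather than finite subsets, your closing caveat about restricting to finite bases of the $W_i^\perp$ is unnecessary in this context.
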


We will also use the following definition which is analogous to the notion of ``coset'' in the group setting. 

\begin{definition}
Let $K\subset F$ be a field extension and $M$ be an intermediate subfield of it. Then a {\it nontrivial linear translate} of $M$ is a $K$-subspace of the form $xM$  for a nonzero element  $x\in F$.
\end{definition}

Now, we are ready to prove Theorem \ref{Linear analog of matchings}. We acknowledge that a similar method has also been suggested in \cite{Aliabadi 3}. It is worth pointing out that the following sufficient condition may be seen as the linear analogue of Proposition \ref{Matching generalziation}. 

\begin{proof}[Proof of Theorem \ref{Linear analog of matchings}.]
Assume to the contrary that  $A$ is not matched to $B$. Then, by Theorem \ref{dimension estimate}, there exists a basis $\mathcal{A}=\{a_1,\ldots,a_n\}$ of $A$ and $J\subset \{1,\ldots,n\}$ such that 
\begin{align*}
\dim_K\bigcap_{i\in J}\left(a^{-1}_iA\cap B\right)> n-\# J.
\end{align*}
Let $S=\langle a_i: i\in J\rangle$ be a $K$-subspace of $A$, $U=\underset{i\in J}{\bigcap}\left(a_i^{-1}A\cap B\right)$ and $U_0=\langle U\cup \{1\}\rangle$. By Proposition \ref{subpaces product}, there exists an intermediate subfield $M$ of $K\subset F$ such that 
\begin{align}
\dim_K\langle U_0S\rangle  \geq \dim_KU_0+\dim_KS-\dim_K M,
\end{align}
where $M$ is the stabilizer of $\langle U_0S\rangle$.  Define $U'=M\cup U$. Invoking Proposition \ref{subpaces product} one more time, one can find an intermediate subfield $M'$ of $K\subset F$ for which
\begin{align}\label{eqn3}
\dim_K\langle U'S\rangle\geq\dim_K\langle U'\rangle+\dim_K S-\dim_K M',
\end{align}
where $M'$ is the stabilizer of $\langle U'S\rangle$. The following computations show that $\langle U'S\rangle=\langle U_0S\rangle$;
\begin{align}\label{eqn4}
\langle U'S\rangle=&\langle(M\cup U)S\rangle=\langle MS\rangle\cup \langle U_0S\rangle\notag\\
=&\langle MS\rangle\cup\langle U_0SM\rangle=M\langle S\cup U_0S\rangle\notag\\
=&M\langle U_0S\rangle=\langle U_0S\rangle.
\end{align}
Then, the stabilizers of these two subspaces must be the same. That is, $M=M'$. Then we would have 
\begin{align}\label{eqn5}
\dim_K\langle U'S\rangle\geq\dim_K\langle U'\rangle+\dim_KS-\dim_K M.
\end{align}
Having \eqref{eqn4} and \eqref{eqn5} at hand and  using the inclusion-exclusion principle for vector spaces we obtain:
\begin{align}\label{eqn6}
\dim_K\langle U_0S\rangle=&\dim_K\langle U'S\rangle\notag\\
\geq& \dim_K\langle U'\rangle+\dim_K S-\dim_K M\notag\\
=&\dim_K\langle M\cup U\rangle+\dim_KS-\dim_KM\notag\\
=&\dim_KM+\dim_KU-\dim_K(M\cap U)+\dim_K S-\dim_KM\notag\\
=&\dim_KU+\dim_KS-\dim_K(M\cap U).
\end{align}
We now have two cases for $M\cap U$:
\begin{enumerate}
\item
If $M\cap U=\{0\}$, $\dim_K(S\cup SU)> n$. On the other hand, since $S\cup SU\subset  A$, we would have $\dim_KA>n$, contradicting the assumption $\dim_KA=n$.
\item
If $M\cap U\neq\{0\}$, then $M\cap B\neq\{0\}$. Choose a nonzero element $b\in M\cap B$. Also let $x$ be a nonzero element of $US$. Then $xK(b)\subset USM\subset A$, contradicting the assumption that $A$ does not contain any nontrivial linear translate of $K(b)$.
\end{enumerate}
Therefore $A$ is matched to $B$, as claimed.
\end{proof}

Back to the group setting, for two finite subsets $A$ and $B$ of a group $G$ with $\#A=\#B>0$, clearly the condition $A\cap (A+B)=\emptyset$ implies that $A$ is matched to $B$. One can go further and argue that every bijection from $A$ to $B$ is a matching. The linear analogue of this statement is studied in \cite[Theorem 6.3]{Eliahou 3}, in which it is proved that for $n$-dimensional $K$-vector spaces $A$ and $B$, the condition $A\cap \langle AB \rangle=\{0\}$ not only implies that $A$ is matched to $B$, but also that every isomorphism from $A$ to $B$  is a strong matching. Another obvious observation in the group setting is that if $A$ is matched to $B$, then $A+B\neq A$. In Theorem \ref{vector space span} we formulate the linear analogue of this observation. To prove Theorem \ref{vector space span} we will need the following lemma the proof of which is obtained along the same lines as used in the proof of Lemma \ref{Coset}.

\begin{lemma}\label{linear translate}
Let $K\subset F$ be a field extension and $A$ and $B$ be finite-dimensional $K$-subspaces of $F$. Assume further that $0<\dim_{K} A\leq \dim_{K} B$ and $\langle AB \rangle=A$. Then $B$ is a subfield of $F$ and $A$ is a linear translate of $B$.
\end{lemma}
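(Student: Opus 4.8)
The plan is to mimic closely the proof of Lemma \ref{Coset}, replacing ``translation by a group element'' with ``multiplication by a field element'' and ``subgroup'' with ``subfield''. First I would fix a nonzero element $b\in B$ and consider the map $\varphi_b:A\to bA$ given by $a\mapsto ba$. Since $b\neq 0$, this is an injective $K$-linear map, so $\dim_K(bA)=\dim_K A$. Because $bA\subseteq \langle AB\rangle=A$ and $A$ is finite-dimensional, we get $bA=A$. Thus every nonzero element of $B$ stabilizes $A$ multiplicatively.

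Next I would introduce the multiplicative stabilizer $X=\{x\in F:\ xA=A\}$, or rather $X=\{x\in F:\ xA\subseteq A\}$ together with $0$; the first step shows $B\subseteq X$. The key structural claim is that $X$ is a subfield of $F$: it is closed under addition (if $xA\subseteq A$ and $yA\subseteq A$ then $(x+y)A\subseteq A$), closed under multiplication, contains $1$ (if $A\neq 0$ then $1\cdot A=A$), and — the one genuinely new point compared with the group case — is closed under multiplicative inverses. For the inverse: if $x\neq 0$ and $xA=A$, then multiplying by $x^{-1}$ gives $A=x^{-1}A$. One should be slightly careful to note that $xA\subseteq A$ with $\dim_K(xA)=\dim_K A$ forces $xA=A$, so that $X$ really consists of elements with $xA=A$, which is what makes the inverse argument go through. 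Since $X$ is a subring of the field $F$ that is closed under inverses of nonzero elements and contains $K$ (as $K$ clearly stabilizes the $K$-subspace $A$), $X$ is a subfield with $K\subseteq X\subseteq F$.

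Then I would run the dimension-counting argument exactly as in Lemma \ref{Coset}. We have $XA=A$ (since $1\in X$ and $X\cdot A\subseteq A$ while $A=1\cdot A\subseteq XA$). Fix $0\neq a\in A$; then $aX$ is a $K$-subspace of $F$ of dimension $\dim_K X$ (the map $x\mapsto ax$ is injective $K$-linear), and $aX\subseteq XA=A$. Hence
\begin{align*}
\dim_K B \ \geq\ \dim_K A \ \geq\ \dim_K(aX)\ =\ \dim_K X\ \geq\ \dim_K B,
\end{align*}
the last inequality because $B\subseteq X$. So all these are equalities: $\dim_K X=\dim_K B$, and since $B\subseteq X$ with equal finite $K$-dimension, $B=X$, proving $B$ is a subfield. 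Likewise $\dim_K A=\dim_K(aX)$ with $aX\subseteq A$, so $A=aX=aB$, i.e. $A$ is the linear translate $aB$ of the subfield $B$.

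I expect the main (indeed the only) subtlety to be the verification that $X$ is closed under taking inverses — this is where the field structure of $F$ is essential and where the argument genuinely departs from the group-theoretic Lemma \ref{Coset}; everything else is a routine transcription. A minor related care point is to make sure one works with the set of $x$ satisfying $xA=A$ (not merely $xA\subseteq A$) when invoking inverses, which is harmless since finite-dimensionality upgrades containment to equality. I would present the proof compactly, referring back to Lemma \ref{Coset} for the parts that are identical and spelling out only the inverse-closure step in full.
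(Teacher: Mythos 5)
Your proposal is correct and follows essentially the same route as the paper's own proof: show each nonzero $b\in B$ satisfies $bA=A$, pass to the multiplicative stabilizer of $A$, verify it is a subfield containing $B$, and then run the same dimension count $\dim_K B\geq\dim_K A\geq\dim_K(aX)=\dim_K X\geq\dim_K B$ to get $B=X$ and $A=aB$. Your extra care in defining the stabilizer as $\{x\in F: xA\subseteq A\}$ and upgrading containment to equality for nonzero $x$ (so that inverse-closure goes through and $0$ is included) is a slight refinement of the paper's phrasing, not a different argument.
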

\begin{proof}
If $b$ is a nonzero element of $B$, then the linear transformation $T:A\to Ab$ is injective, and thus $\dim_KAb=\dim_KA$. Since $Ab\subset \langle AB \rangle=A$ and $A$ is finite-dimensional, it follows that $Ab=A$. Let  $M=\{x\in L: Ax=A\}$. Then $B\subset M$ and $M$ is a subfield of $L$. Also $AM=A$. If $a\in A$ is nonzero, then $aM$ is a linear translate of the subfield $M$, and thus $\dim_K(aM)=[M:K]$ and we have $aM\subset AM=A$. Then $\dim_KB\geq \dim_KA\geq \dim_K(aM)=[M:K]\geq \dim_KB$, so $\dim_KB=[M:K]$ and $\dim_K A=\dim_K(aM)$. Since $B\subset M$ and $\dim_KB=[M:K]$, it follows that $B=M$, so $B$ is a subfield as claimed.

Since $aM\subset A$, and $\dim_KA=\dim_K(aM)$, it follows that $A=aM$. We know that $M=B$, so $A=aM=aB$, and thus $A$ is the linear translate $aB$ of $B$. The proof is complete.
\end{proof}
\begin{proof}[Proof of Theorem \ref{vector space span}]
Assume to the contrary that $\langle AB \rangle=A$. Then by Lemma \ref{linear translate}, $B$ is a subfield of $F$ and so $1\in B$. Applying Lemma 2.3 in \cite{Eliahou 3} for $A$ and $B$ implies that $A$ cannot be matched to $B$, contradicting our assumption. 
\end{proof}

\subsection{Unmatchable subspaces in a field extension} 
The purpose of this subsection is to formulate the linear analogue of Theorem \ref{Unmatchable} in a field extension $K\subset F$. In the process, we use the dimension estimate of matchable subspaces (Proposition \ref{dimension estimate}) which is derived naturally from the linear version of  Hall's marriage theorem. We assume that $K,F, A,B$ and $n$ are as in Section 3. We assume that $A$ is not matched to $B$. Our goal is to estimate the dimension of the largest subspace of $A$ which is matched to a subspace of $B$, denoted by $M(A,B)$. Since $A$ is not matched to $B$, there exists a basis $\mathcal{A}=\{a_1,\ldots,a_n\}$ which fails the dimension criteria, namely for some $J\subset\{1,\ldots,n\}$,
\begin{align*}
\dim\bigcap_{i\in J}\left(a_i^{-1}A\cap B\right)> n-\# J.
\end{align*}
Define:
\[D_{\mathcal{A}}(B)=\max\{\#J:\ J\subset\{1,\ldots,n\},\;\; \mathrm{and}\;\;\dim\underset{i\in J}{\bigcap}\left(a_i^{-1}A\cap B\right)>n-\#J\},\]
and
\[D(A,B)=\max\{D_{\mathcal{A}}(B):\ \mathcal{A}\; \text{is a basis for}\;A\}.\]
We now formulate and conjecture the linear analogue of Theorem \ref{Unmatchable} as follows. 

\begin{conjecture}
Let $K\subset F$ be a field extension and $A$ and $B$ be two $n$-dimensional $K$-subspaces of $F$ with $n\geq1$, and $\langle AB\rangle \neq A$. Assume that $A$ is not matched to $B$. Then $M(A,B)=n-D(A,B)$. 
\end{conjecture}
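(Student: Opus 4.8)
The plan is to mirror the two-step proof of Theorem~\ref{Unmatchable}, replacing Hall's marriage theorem and its alternating-tree machinery by Rado's theorem and its defect (deficiency) version, and replacing the bipartite graph $\mathcal{G}_{A,B}$ by a linear-algebraic datum attached to a basis of $A$. Fix a basis $\mathcal{A}=\{a_1,\dots,a_n\}$ of $A$ and put $V_i=a_i^{-1}A\cap B$ for $1\le i\le n$, a $K$-subspace of $B$. Passing to the dual space $B^{*}$, an assignment $i\mapsto b_i$ yields a basis of $B$ matched to $\mathcal{A}$ exactly when the dual basis satisfies $b_i^{*}\in V_i^{\perp}$ for all $i$, since $V_i\subseteq\langle b_1,\dots,\widehat{b_i},\dots,b_n\rangle$ is equivalent to $V_i\subseteq\ker b_i^{*}$. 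Thus Proposition~\ref{dimension estimate} is precisely Rado's criterion for the family $(V_1^{\perp},\dots,V_n^{\perp})$ of subspaces of $B^{*}$, via $\dim_K\sum_{i\in J}V_i^{\perp}=n-\dim_K\bigcap_{i\in J}V_i$, and partial matchings of $\mathcal{A}$ into $B$ correspond to linearly independent partial transversals of $(V_i^{\perp})_{i=1}^{n}$. One would declare the matched subspace attached to a transversal supported on $I\subseteq\{1,\dots,n\}$ to be $\langle a_i:i\in I\rangle$, so that $M(A,B)$ becomes the largest size of such a transversal, optimised over bases $\mathcal{A}$ of $A$.

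Step~1, the bound $M(A,B)\le n-D(A,B)$, would proceed as in the group case: if a basis $\mathcal{A}$ and a set $J$ realise $D(A,B)=D_{\mathcal{A}}(B)=\#J$ with $\dim_K\bigcap_{i\in J}V_i>n-\#J$, then $\dim_K\sum_{i\in J}V_i^{\perp}=n-\dim_K\bigcap_{i\in J}V_i<\#J$, so no independent system can occupy all of the annihilators $V_i^{\perp}$, $i\in J$; counting the indices of $J$ that every partial transversal must omit gives a lower bound $\dim_K\bigcap_{i\in J}V_i+\#J-n$ on the number of unmatched basis vectors, which one then wants to identify with $D(A,B)$. Step~2, the bound $M(A,B)\ge n-D(A,B)$, would use the defect form of Rado's theorem: for a family $(E_i)_{i\in[n]}$ in a matroid with rank function $r$, the maximum size of an independent partial transversal equals $\min_{J\subseteq[n]}\bigl(r(\bigcup_{i\in J}E_i)+n-\#J\bigr)$; specialising $r$ to $\dim_K$ in $B^{*}$ and $E_i$ to $V_i^{\perp}$ turns this into $\min_{J}\bigl(2n-\dim_K\bigcap_{i\in J}V_i-\#J\bigr)$, and choosing the basis $\mathcal{A}$ of $A$ that makes this quantity largest should recover $n-D(A,B)$. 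The hypothesis $\langle AB\rangle\ne A$ plays the role of $A+B\ne A$ in Theorem~\ref{Unmatchable}: by Lemma~\ref{linear translate} it excludes the degenerate case where $B$ is a subfield and $A$ a linear translate of it, in which $M(A,B)=0$ (and, since then $1\in B$, the space $A$ is automatically unmatched to $B$ by Lemma~2.3 of \cite{Eliahou 3}); in particular, the case $n=1$ is vacuous under this hypothesis.

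The main obstacle will be the interplay between the optimisation over bases of $A$ and the combinatorial identity one needs, namely that $D(A,B)=\max_{\mathcal{A}}\max_{J}\bigl(\dim_K\bigcap_{i\in J}V_i+\#J-n\bigr)$. The subspaces $V_i=a_i^{-1}A\cap B$ genuinely change with $\mathcal{A}$, whereas the classical defect formula is stated for a fixed family, so one must exhibit a single basis of $A$ that simultaneously witnesses the worst deficiency and the best partial matching; we expect this to come from starting with a partial matching of maximal size and running a linear analogue of the alternating-tree argument to extract from the unmatched indices a set $J$ for which $\dim_K\bigcap_{i\in J}V_i+\#J-n$ equals the number of unmatched vectors. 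One must also reconcile the bookkeeping in the definition of $D_{\mathcal{A}}(B)$ (written as the size of a largest violating $J$) with the deficiency $\dim_K\bigcap_{i\in J}V_i+\#J-n$; this seems to require showing that an extremal $J$ can be chosen with $\bigcap_{i\in J}V_i=B$, equivalently $a_iB\subseteq A$ for $i\in J$, or else recording $D_{\mathcal{A}}(B)$ as the deficiency itself. Settling this point is where the substantive work lies; once the correct basis of $A$ and violating set are identified, the remaining steps are routine dimension counts, carried out exactly as in the proof of Theorem~\ref{Unmatchable}.
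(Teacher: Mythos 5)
You should first note that the paper does not prove this statement at all: it is recorded as an open conjecture, so there is no proof of record to match, and what you have written is a programme rather than a proof. Its two substantive reductions are both unestablished. First, the identification of $M(A,B)$ with the maximum size of an independent partial transversal of the family $\{(a_i^{-1}A\cap B)^{\perp}\}_{i=1}^{n}$ (optimised over bases $\mathcal{A}$) is not justified. In the linear setting, $M(A,B)$ is the largest dimension of a subspace $A_0\subseteq A$ that is \emph{matched} to some subspace $B_0\subseteq B$, which by definition requires that \emph{every} basis of $A_0$ be matched to a basis of $B_0$, and the matching condition refers to $a^{-1}A_0\cap B_0$ relative to the pair $(A_0,B_0)$, not to $a_i^{-1}A\cap B$. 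A partial transversal supported on $I$ produces one particular basis of $\langle a_i:i\in I\rangle$ and vectors $b_i$ controlled only inside hyperplanes of $B$, so it yields neither the "all bases" quantifier nor the correct intersection spaces; conversely a subspace $A_0$ witnessing $M(A,B)$ need not be spanned by part of any distinguished basis of $A$. This reduction to Rado's theorem and its defect form is therefore itself a missing step, not bookkeeping.

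Second, even granting that reduction, both inequalities have gaps. Your Step~1 imitates the group argument, but there all subsets of $A$ sit inside a single fixed bipartite graph, so one deficient set $S$ obstructs every partial matching; here the family $V_i=a_i^{-1}A\cap B$ changes with the basis, so a deficiency witnessed by one basis $\mathcal{A}$ and one $J$ does not obviously bound the matchability of an arbitrary subspace $A_0\subseteq A$. Your Step~2 needs the identity $D(A,B)=\max_{\mathcal{A}}\max_{J}\bigl(\dim_K\bigcap_{i\in J}(a_i^{-1}A\cap B)+\#J-n\bigr)$, whereas the paper defines $D_{\mathcal{A}}(B)$ as the largest cardinality of a violating $J$, and these two quantities need not coincide; you acknowledge this yourself as "where the substantive work lies," together with the need for a linear analogue of the alternating-tree extraction of an extremal $J$, which is exactly the tool that is not currently available. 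As it stands the proposal proves neither $M(A,B)\le n-D(A,B)$ nor $M(A,B)\ge n-D(A,B)$; it is a plausible strategy (and the observation that Proposition~\ref{dimension estimate} is Rado's criterion for $\{V_i^{\perp}\}$ via $\dim_K\sum_{i\in J}V_i^{\perp}=n-\dim_K\bigcap_{i\in J}V_i$ is a sound starting point), but the conjecture remains open after it.
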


Note that according to Theorem \ref{vector space span}, if $\langle AB\rangle=A$, then $M(A,B)=0$.

\section{Matching and Dimension $m$-Intersection Property}\label{intersection property}
The main objective of this section is to present a linear algebra result whose proof relies on tools utilized in matching theory. The first tool which is heavily used in matching theory is called the $m$-intersection property.
The concept of the $m$-intersection property was first studied in \cite{Brualdi} to investigate the sparse basis problem.
(see \cite{Friedland} for more results on the sparse basis problem.)
Following Brualdi, Friedland and Pothen \cite{Brualdi}, we say that the family $\mathcal{J}=\{J_1,\ldots,J_t\}$ of subsets of $\{1,\ldots,n\}$, each of cardinality $m-1$, satisfies the {\it $m$-intersection property} provided that
\begin{align*}
\#\bigcap_{i\in J}J_i\leq m-\# J,
\end{align*}
for any $J\subset\{1,\ldots,t\}$, $J\neq\emptyset$.
It is known that for a given set $\mathcal{J}$ one can check efficiently, i.e. in polynomial time, whether $\mathcal{J}$ satisfies the $m$-intersection property.
 This notion is generalized in \cite{Aliabadi 4} as follows:
\begin{definition}
The family $\mathcal{J}=\{J_1,\ldots,J_t\}$ of subsets of $\{1,\ldots,n\}$, each of cardinality $\leq m-1$, satisfies the {\it weak $m$-intersection property} provided
\begin{align*}
\#\bigcap_{i\in J}J_i\leq m-\# J,
\end{align*}
for all $J\subset \{1,\ldots,t\}$, $J\neq\emptyset$.
\end{definition}
Given an abelian group $G$ and finite subsets $A$ and $B$ of $G$ with $\# A=\# B=n>0$, ``whether $A$ is matched to $B$'' is characterized in \cite{Aliabadi 4} based on ``whether a certain family of subsets of $A$ possesses the weak $n$-intersection property.''

The intersection property introduced above may be of interest in its own right. The following result is proved in \cite{Brualdi}.
\begin{theorem}\label{m-intersection property 1}
Let $J_1,J_2,\ldots,J_t$ be $t<m$ subsets of $\{1,\ldots,n\}$, each of cardinality $m-1$, and assume the $m$-intersection property 
\begin{align}\label{eqq*}
\#\bigcap_{i\in J}J_i\leq m-\# J,
\end{align}
holds for all nonempty subsets $J$ of $\{1,\ldots,t\}$. Then there exist $m-t$ subsets $J_{t+1},\ldots,J_m$ of $\{1,\ldots,n\}$ of cardinality $m-1$ such that \eqref{eqq*} holds for all nonempty subsets $J$ of $\{1,2,\ldots,m\}$.
\end{theorem}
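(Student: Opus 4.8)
The plan is to prove the statement by induction on the number $m-t$ of subsets to be adjoined, which reduces everything to a single \emph{one-step extension lemma}: if $J_1,\dots,J_t$ with $t<m$ are $(m-1)$-subsets of $\{1,\dots,n\}$ satisfying \eqref{eqq*} on all nonempty $J\subseteq\{1,\dots,t\}$, then there is an $(m-1)$-subset $J_{t+1}$ such that \eqref{eqq*} still holds on all nonempty $J\subseteq\{1,\dots,t+1\}$; iterating this lemma $m-t$ times produces the claimed sets. To set up the lemma I would pass to complements: writing $C_i=\{1,\dots,n\}\setminus J_i$ and $N=n-m+1$, each $C_i$ has size $N$, and the identity $\#\bigcap_{i\in J}J_i=n-\#\bigcup_{i\in J}C_i$ turns \eqref{eqq*} into the surplus condition $\#\bigcup_{i\in J}C_i\ge \#J+(N-1)$ for all nonempty $J$. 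Adjoining $J_{t+1}$, equivalently its complement $C_{t+1}$ (an arbitrary $N$-set), preserves the property precisely when $C_{t+1}\not\subseteq U_{J'}$ for every \emph{critical} index set $J'$, i.e. every $J'$ with $\#\bigcup_{i\in J'}C_i=\#J'+(N-1)$, where $U_{J'}:=\bigcup_{i\in J'}C_i$; for non-critical $J'$ the inequality already carries slack, so it imposes no condition on $C_{t+1}$.

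The next step is to understand the critical index sets. Since $J\mapsto\#\bigcup_{i\in J}C_i$ is submodular, a short inclusion–exclusion argument shows that if $J'$ and $J''$ are critical and $J'\cap J''\neq\emptyset$, then $J'\cup J''$ is critical as well. Consequently the \emph{maximal} critical index sets $J^*_1,\dots,J^*_r$ are pairwise disjoint, whence $\sum_a \#J^*_a=\#\bigcup_a J^*_a\le t\le m-1$; moreover every critical union lies inside some $U_a:=\bigcup_{i\in J^*_a}C_i$, and $\#U_a=\#J^*_a+N-1$. Hence it suffices to find an $N$-set $C_{t+1}$ contained in none of $U_1,\dots,U_r$.

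To produce such a set I would count. The number of $N$-subsets of $\{1,\dots,n\}$ that are \emph{bad}, i.e. contained in some $U_a$, is at most $\sum_{a=1}^r\binom{\#U_a}{N}=\sum_{a=1}^r\binom{N+\#J^*_a-1}{N}$. A telescoping estimate for binomial coefficients, combined with $\sum_a\#J^*_a\le m-1$, bounds this sum by $\binom{n-1}{N}$, which is strictly smaller than the total number $\binom{n}{N}$ of $N$-subsets. Therefore a good $C_{t+1}$ exists, and $J_{t+1}:=\{1,\dots,n\}\setminus C_{t+1}$ completes the one-step lemma and hence the induction.

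I expect the main obstacle to be the structural reduction in the middle step rather than the final arithmetic. The naive demand that $C_{t+1}$ escape \emph{every} critical union involves a family that may be exponentially large and whose members overlap heavily, and a direct transversal argument is wasteful: the number $r$ of minimal critical sets can genuinely exceed $N$, so one cannot simply take a system of distinct representatives. The crucial point is therefore to use submodularity to collapse all critical unions onto the pairwise-disjoint \emph{maximal} ones, after which the bound $\sum_a\#J^*_a\le m-1$ makes the union-bound count succeed with room to spare. Verifying that critical index sets are closed under union, and checking the telescoping binomial inequality, are the two places where care is required; the bookkeeping in the complement reformulation and the outer induction are then routine.
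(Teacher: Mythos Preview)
The paper does not actually prove this theorem: it is quoted verbatim as a result of Brualdi, Friedland and Pothen \cite{Brualdi} (their Lemma~5.1), and the only ``proof'' in the paper is the one-line remark, attached to the linear analogue (Observation~\ref{m-intersection property 2}), that one should mimic the argument in \cite{Brualdi} with subsets/cardinality replaced by subspaces/dimension. So there is nothing in the paper to compare your argument against directly.

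That said, your proposal is sound and in fact reconstructs the Brualdi--Friedland--Pothen argument. Passing to complements $C_i$ and rewriting \eqref{eqq*} as the surplus condition $\#\bigcup_{i\in J}C_i\ge\#J+N-1$ is exactly the right normalization; the key structural step --- that two critical index sets with nonempty intersection have a critical union --- follows from inclusion--exclusion on $U_{J'}\cup U_{J''}$ together with the lower bound applied to $J'\cap J''$, so the maximal critical index sets are pairwise disjoint and every critical union sits inside some $U_a$. Your final count also goes through: with $k_a=\#J^*_a$ and $s=\sum_a k_a\le t\le m-1$, the function $k\mapsto\binom{N+k-1}{N}$ is convex in $k$ and vanishes at $k=0$, hence superadditive, giving
\[
\sum_{a}\binom{N+k_a-1}{N}\;\le\;\binom{N+s-1}{N}\;\le\;\binom{N+m-2}{N}=\binom{n-1}{N}<\binom{n}{N},
\]
which guarantees a good $C_{t+1}$. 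The only tacit hypothesis you are using is $n\ge m$ (equivalently $N\ge 1$); this is harmless since for $n=m-1$ the premise already forces $t\le 1$ and no extension can exist, so the theorem is vacuous or degenerate there. Your outer induction on $m-t$ is routine.
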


We aim to provide the linear analogue of Theorem \ref{m-intersection property 1}. For this sake, we first define the dimension $m$-intersection property, which is analogous to the notion of the $m$-intersection property in set theory. 
\begin{definition}
Let $W$ be an $n$-dimensional vector space and $U_1,\ldots,U_t$ be $t$ subspaces of $W$ of dimension $m-1$. We say that the family $\mathcal{U}=\{U_1,\ldots,U_t\}$ satisfies the {\it dimension $m$-intersection property} provided that
\begin{align*}
\dim\bigcap_{i\in J}U_i\leq m-\#J,
\end{align*}
for any $J\subseteq\{1,\ldots,t\}$, $J\neq\emptyset$.
\end{definition}
Note that the dimension $m$-intersection property is used to study matchable bases of subspaces in a given field extension. See \cite{Aliabadi 4} for more details. We now formulate the linear analogue of Theorem \ref{m-intersection property 1}, whose proof is obtained from a simple adaption of the proof of Lemma 5.1 in \cite{Brualdi}, in terms of our linear situation.
\begin{observation}\label{m-intersection property 2}
Let $W$ be an $n$-dimensional vector space and $U_1,U_2,\ldots,U_t$ be $t<m$ subspaces of $W$, each of dimension $m-1$, and assume that the dimension $m$-intersection property 
\begin{align}\label{eqq**}
\dim\bigcap_{i\in J}U_i\leq m-\# J,
\end{align}
holds for all nonempty subsets $J$ of $\{1,\ldots,t\}$. Then there exist $m-t$ subspaces $U_{t+1},\ldots,U_m$ of $W$ of dimension $m-1$ such that \eqref{eqq**} holds for all nonempty subsets $J$ of $\{1,2,\ldots,m\}$.
\end{observation}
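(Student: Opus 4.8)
The plan is to mimic the inductive argument from Lemma 5.1 of \cite{Brualdi}, replacing cardinalities of subsets by dimensions of subspaces and the set-theoretic intersection property by the dimension $m$-intersection property. It suffices, by induction on $m-t$, to produce a single new subspace $U_{t+1}$ of dimension $m-1$ such that the enlarged family $\{U_1,\dots,U_t,U_{t+1}\}$ still satisfies \eqref{eqq**}; repeating this $m-t$ times yields the claim, and the base case $t=m$ is vacuous. So fix a family $\mathcal{U}=\{U_1,\dots,U_t\}$ with $t<m$ satisfying \eqref{eqq**}, and seek $U_{t+1}$.

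First I would isolate exactly what \eqref{eqq**} requires of $U_{t+1}$. For a nonempty $J\subseteq\{1,\dots,t+1\}$ not containing $t+1$ the condition already holds; so the only new constraints are, for each nonempty $J\subseteq\{1,\dots,t\}$ (together with $J=\emptyset$, giving $\dim U_{t+1}\le m-1$, which we enforce by construction),
\begin{align*}
\dim\left(U_{t+1}\cap\bigcap_{i\in J}U_i\right)\le m-\#J-1.
\end{align*}
Writing $V_J=\bigcap_{i\in J}U_i$, the hypothesis gives $\dim V_J\le m-\#J$, so the requirement is that $U_{t+1}$ must not contain $V_J$ whenever $\dim V_J=m-\#J$ (if $\dim V_J<m-\#J$ the inequality is automatic for any $U_{t+1}$, since $\dim(U_{t+1}\cap V_J)\le\dim V_J\le m-\#J-1$). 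Thus I only need to choose a hyperplane $U_{t+1}$ of $W$ (dimension $n-1\ge m-1$; if $n>m$ I then shrink it to dimension exactly $m-1$, which only decreases all the intersection dimensions and so preserves the inequalities) that fails to contain each of the finitely many ``critical'' subspaces $V_J$ with $\dim V_J=m-\#J\ge 1$. Each such $V_J$ is nonzero, so ``$U_{t+1}\not\supseteq V_J$'' is equivalent to ``$U_{t+1}$ does not contain some fixed nonzero vector $v_J\in V_J$''.

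The construction of such a hyperplane is then immediate: pick one nonzero $v_J$ from each critical $V_J$, and choose a linear functional $\ell\in W^*$ with $\ell(v_J)\neq 0$ for all (finitely many) $J$; such $\ell$ exists because the set of functionals vanishing at a fixed nonzero vector is a proper subspace of $W^*$, and a finite union of proper subspaces is never all of $W^*$ when... here is the one subtlety: over a finite field $W^*$ \emph{can} be a union of finitely many proper subspaces, so I would instead argue directly that for each $j$ the functionals with $\ell(v_J)\ne0$ form the complement of a hyperplane in $W^*$, i.e.\ a set of size $\#K^{\dim W}-\#K^{\dim W-1}$, and a finite intersection of such co-hyperplane sets is nonempty provided there are few enough of them — precisely, the number of critical sets $J$ is at most $2^t-1$, which is the kind of bound one wants to compare against $\#K$. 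I expect \textbf{this finite-field bookkeeping to be the main obstacle}: one must check that the number of constraints never exceeds what a hyperplane can simultaneously avoid, or else restrict the statement's scope exactly as \cite{Brualdi} implicitly does. Granting that, setting $U_{t+1}=\ker\ell$ (or a dimension-$(m-1)$ subspace of it) completes the inductive step, and iterating gives $U_{t+1},\dots,U_m$ as required.
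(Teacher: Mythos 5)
Your reduction of the inductive step is sound: only the ``tight'' index sets $J\subseteq\{1,\dots,t\}$ with $\dim V_J=m-\#J$ (where $V_J=\bigcap_{i\in J}U_i$) impose genuine conditions, those conditions say exactly that $U_{t+1}$ must not contain $V_J$, and shrinking a hyperplane to a subspace of dimension $m-1$ only decreases intersection dimensions. The gap is precisely the existence step you flag and do not close. You replace ``$U_{t+1}\not\supseteq V_J$'' by the strictly stronger requirement that $U_{t+1}$ miss one \emph{pre-chosen} nonzero vector $v_J\in V_J$ (these are not equivalent), and then ask for a single functional $\ell$ with $\ell(v_J)\neq 0$ for all tight $J$. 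Over a finite field this can genuinely fail: there may be up to $2^t-1$ tight sets, the statement puts no bound on $t$ in terms of $\#K$, and the hyperplanes $\{\ell:\ell(v_J)=0\}$ can then cover $W^*$ (already over $\mathbb{F}_2$ three suitable nonzero vectors rule out every hyperplane of a $2$-dimensional quotient). So the construction is only carried out when $\#K$ is large enough, which is not the generality in which the Observation is stated. Your fallback of ``restricting the statement's scope as \cite{Brualdi} implicitly does'' is not available either: Lemma 5.1 of \cite{Brualdi} is purely set-theoretic, so there is no field restriction to mirror, and the Observation is used in the proof of Theorem \ref{linear algebra} over an arbitrary field, so adding a cardinality hypothesis here would weaken that theorem.

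For comparison, the paper does not route the argument through a covering of the dual space at all: its proof of the Observation is a direct transcription of the inductive proof of Lemma 5.1 in \cite{Brualdi}, with ``subsets, cardinality, $m$-intersection property'' replaced by ``subspaces, dimension, dimension $m$-intersection property'' and with the inclusion--exclusion (modular) identity $\dim(X\cap Y)=\dim X+\dim Y-\dim(X+Y)$ playing the role that counting plays in the set case; no ``functional avoiding finitely many prescribed vectors'' step occurs, hence no field-size issue. If you want to salvage your route, you must work with the actual conditions $U_{t+1}\not\supseteq V_J$ rather than with missed vectors produced by one functional, and exploit the structure of the tight family --- for instance, the same modular identity shows that two tight sets $J,J'$ with $J\cap J'\neq\emptyset$ have tight union, which constrains which subspaces $V_J$ can occur --- so that the avoidance can be achieved over every field, not just when $\#K$ exceeds the number of constraints.
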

\begin{proof}
It suffices to change ``subsets'', ``cardinality'', and ``$m$-intersection property'' to ``subspaces'', ``dimension'', and ``dimension $m$-intersection property'', respectively, in the proof of Lemma 5.1 in \cite{Brualdi}. Then the same argument along with the inclusion-exclusion principle for vector spaces shall complete the proof.
\end{proof}
\subsection{Linear analogue of Hall's marriage theorem}
The second tool which is used in the proof Theorem \ref{linear algebra} is a linear analogue of Hall's marriage theorem. Let $W$ be a vector space over a field $K$ and $\mathcal{U}=\{U_1,\ldots,U_m\}$ be a family of $K$-subspaces of $W$. A {\it free transversal} for $\mathcal{U}$ is a set of  linearly independent vectors $\{x_1,\ldots,x_m\}$ in $W$ with $x_i\in U_i$, $1\leq i\leq m$. In the following theorem due to Rado \cite{Rado} the  necessary and sufficient conditions for the existence of a free transversal of $\mathcal{U}$ are given.
\begin{theorem}\label{transversal}
Let $W$, $K$ and $\mathcal{U}$ be as above. Then $\mathcal{U}$ has a free transversal if and only if 
\begin{align*}
\dim\underset{i\in J}{+} U_i\geq\#J,
\end{align*}
for all $J\subseteq \{1,\ldots,m\}$.
\end{theorem}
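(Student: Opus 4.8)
The plan is to prove necessity directly and sufficiency by induction on the total dimension $\sum_{i=1}^{m}\dim U_i$, driven by a single hyperplane-reduction lemma. Necessity is immediate: if $\{x_1,\dots,x_m\}$ is a free transversal, then for each $J$ the vectors $\{x_i:i\in J\}$ are linearly independent and lie in $\sum_{i\in J}U_i$, whence $\dim\sum_{i\in J}U_i\ge\#J$. For sufficiency I would first settle the base case in which every $U_i$ is one-dimensional, say $U_i=\langle x_i\rangle$: applying the hypothesis to $J=\{1,\dots,m\}$ gives $\dim\langle x_1,\dots,x_m\rangle\ge m$, so the $x_i$ are independent and $\{x_1,\dots,x_m\}$ is the required transversal. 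The inductive step rests on the reduction lemma: if $\dim U_k\ge2$ and the rank condition holds, then some hyperplane $U_k'\subsetneq U_k$ with $\dim U_k'=\dim U_k-1$ can replace $U_k$ while the rank condition persists for every $J$. Granting this, the modified family has strictly smaller total dimension, the induction hypothesis yields a free transversal with $x_k\in U_k'\subseteq U_k$, and that same set is a free transversal of the original family.

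To analyze the lemma, note that replacing $U_k$ only affects sets $J$ with $k\in J$, and for such $J$ the quantity $f(J):=\dim\sum_{i\in J}U_i$ can drop by at most one. Hence a hyperplane $U_k'$ can fail the condition only at sets $J\ni k$ that are \emph{tight}, meaning $f(J)=\#J$; for non-tight $J\ni k$ one has $f(J)\ge\#J+1$, so the reduced value is still $\ge\#J$ regardless of the choice of $U_k'$. Writing $Z_J=\sum_{i\in J\setminus\{k\}}U_i$, a short dimension count using $\#J-1\le\dim Z_J\le f(J)=\#J$ shows that a tight $J\ni k$ is of one of two types: either $U_k\subseteq Z_J$, in which case $J$ obstructs no hyperplane at all; or $U_k\cap Z_J$ is a hyperplane of $U_k$, in which case the unique forbidden hyperplane is exactly $U_k\cap Z_J$ (any other hyperplane $H$ satisfies $H+(U_k\cap Z_J)=U_k\subseteq H+Z_J$, so it is admissible).

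The crux, and the step I expect to be the main obstacle, is to show that all these forbidden hyperplanes coincide, so that only one hyperplane of $U_k$ is ever excluded. I would do this by an uncrossing argument. The function $f$ is submodular, $f(J_1\cup J_2)+f(J_1\cap J_2)\le f(J_1)+f(J_2)$, which follows from $\dim(X+Y)+\dim(X\cap Y)=\dim X+\dim Y$; combined with the rank hypothesis $f\ge\#(\cdot)$ this forces equality throughout when $J_1,J_2$ are tight, so the intersection of two tight sets is tight. Thus the tight sets containing $k$ have a unique minimal member $J_0$, with $J_0\subseteq J$ for every tight $J\ni k$, giving $U_k\cap Z_{J_0}\subseteq U_k\cap Z_J$. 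If $J_0$ is of the first type then every larger tight set is too and no hyperplane is forbidden; if $J_0$ is of the second type then $U_k\cap Z_{J_0}$ has codimension one, forcing $U_k\cap Z_J=U_k\cap Z_{J_0}$ for all second-type tight $J$, so the forbidden hyperplane is unique. Since $\dim U_k\ge2$, the space $U_k$ has at least two distinct hyperplanes over any field, finite or infinite, so a good $U_k'$ exists and the lemma follows. I would close by remarking that, in contrast to Theorem~\ref{general field}, no covering result such as Lemma~\ref{Main Lemma union} is needed: only a single hyperplane must be avoided, and a space of dimension at least two always possesses more than one.
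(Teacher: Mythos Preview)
The paper does not prove Theorem~\ref{transversal}; it is quoted as a known result of Rado \cite{Rado} and invoked as a black box in the proof of Theorem~\ref{linear algebra}. There is therefore no proof in the paper to compare your argument against.

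That said, your proof is correct and self-contained. Necessity is indeed immediate, and your sufficiency argument by induction on $\sum_i\dim U_i$ via the hyperplane-reduction lemma is sound. The uncrossing step is the heart of the matter and you handle it properly: submodularity of $J\mapsto\dim\sum_{i\in J}U_i$, combined with the rank hypothesis $f(J)\ge\#J$, forces the family of tight sets containing $k$ to be closed under intersection, so there is a unique minimal such set $J_0$; your dichotomy between $U_k\subseteq Z_{J_0}$ and $U_k\cap Z_{J_0}$ a hyperplane of $U_k$ then shows that at most one hyperplane of $U_k$ is ever excluded, and since $\dim U_k\ge2$ guarantees at least two hyperplanes over any field, a good $U_k'$ exists. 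One small point worth making explicit: if no tight set containing $k$ exists at all, then every hyperplane of $U_k$ works and the lemma is trivial; your text implicitly assumes $J_0$ exists when you begin the case split. With that clarification the argument is complete. Your closing remark, that no covering bound of the sort in Lemma~\ref{Main Lemma union} is needed here because only a single hyperplane must be avoided, is a nice observation and explains why Rado's theorem holds uniformly over all base fields.
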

\subsection*{A Few Notations}
We shall use the following standard notation. We denote 
\begin{align*}
W^*=\left\{\psi:W\to K:\, \psi \text{ is linear transformation}\right\},
\end{align*}
the {\it dual} of $W$. Moreover, for any subspace $V$ of $W$, we denote
\begin{align*}
V^\perp=\left\{\psi\in W^*:\, V\subset \ker\psi\right\},
\end{align*}
the {\it orthogonal} of $V$ in $W^*$. We will also use the fact that $V\oplus V^\perp=W$. 

Having Observation \ref{m-intersection property 2} and Theorem \ref{transversal} at hand we are ready to prove Theorem \ref{linear algebra}. 

\begin{proof}[Proof of Theorem \ref{linear algebra}.]
According to Observation \ref{m-intersection property 2}, one may find $(n-1)$-dimensional subspaces $U_{t+1},\ldots,U_n$ such that the family $\{U_1,\ldots,U_n\}$ satisfies the $n$-intersection property. Hence, for any $J\subseteq\{1,\ldots,n\}$, $J\neq\emptyset$, we have 
\begin{align}\label{eqq(..)}
\dim\bigcap_{i\in J}U_i\leq n-\# J.
\end{align}
Taking the orthogonal in the dual space $W^*$, we have 
\begin{align*}
\dim\left(\bigcap_{i\in J}U_i\right)^\perp \geq \#J.
\end{align*}
Thus,
\begin{align*}
\dim \sum_{i\in J}U_i^\perp \geq \# J.
\end{align*}
It follows from Theorem \ref{transversal} that there exists a free transversal $(\psi_1,\ldots,\psi_n)\in W^*$ for the family of subspaces $\{U_i^\perp\}_{i=1}^n$. Since $\psi_1,\ldots,\psi_n$ are linearly independent and $\dim W^*=n$, then $\{\psi_i\}_{i=1}^n$ forms a basis for $W^*$. Let $\{x_1,\ldots,x_n\}$ be a basis for $W$ for which $x_i^*=\psi_i$, $1\leq i\leq n$. Then $U_i\subseteq \ker x_i^*$. Condition \eqref{eqq(..)} implies that $U_i\neq U_j$, for some $1\leq i, j\leq n$ and since $\dim U_i=\dim U_j=n-1$, then $U_i+U_j=W$. We totally have $W= U_i+U_j\subseteq \ker x_i^*+\ker x_j^*\subseteq W $, which implies the desired result.
\end{proof}

\section{Future work}\label{Future}
 The matching problems considered in this paper can be reformulated for matroids in a field extension in some seemingly unchallenging ways; let $K\subset F$ be a field extension, $A$ be a subset of $F$, and $M_1$ and $M_2$ be two matroids over $K(A)$, where $K(A)$ stands for the subfield of $F$ generated by $A$. Then adapting the notion of matchable subspaces, one may define matchable matroids. The main obstacle in finding the matroid analogue of matchings may be the matroid version of Proposition \ref{subpaces product}. We hope that the techniques presented in \cite{Bachoc} have more general
applicability, especially in the direction of generalizing these statements to matroids in a field extension.
 
\section*{Acknowledgement}
We are deeply grateful to Shira Zerbib and Khashayar Filom for their constant encouragement, generosity, and for many insightful conversations. This work was supported by the Iowa State University Dean’s High Impact Award for undergraduate summer research in mathematics.

\end{document}